\theoremstyle{plain}
\newtheorem{theorem}{Theorem}[section]
\newtheorem{corollary}[theorem]{Corollary}
\newtheorem{lemma}[theorem]{Lemma}
\newtheorem{proposition}[theorem]{Proposition}
\theoremstyle{definition}
\newtheorem{example}[theorem]{Example}
\theoremstyle{remark}
\newtheorem{remark}[theorem]{Remark}
\def\Z{{\mathbb Z}}
\def\N{{\mathbb N}}
\def\R{{\mathbb R}}
\def\C{{\mathbb C}}
\def\Q{{\mathbb Q}}
\def\cG{{\mathcal G}}
\def\cS{{\mathcal S}}
\def\a{{\boldsymbol a}}
\def\b{{\boldsymbol b}}
\def\e{{\boldsymbol e}}
\def\g{{\boldsymbol g}}
\def\s{{\boldsymbol s}}
\def\u{{\boldsymbol u}}
\def\v{{\boldsymbol v}}
\def\w{{\boldsymbol w}}
\def\x{{\boldsymbol x}}
\def\z{{\boldsymbol z}}
\def\0{{\boldsymbol 0}}
\def\1{{\boldsymbol 1}}
\def\bbeta{{\boldsymbol{\beta}}}
\def\supp{{\rm supp}}
\def\nsupp{{\rm nsupp}}
\def\ord{{\rm ord}}
\def\Ker{{\rm Ker}}
\def\ini{{\rm in}}
\def\rank{{\rm rank}}
\def\NS{{\rm NS}}
\title{Logarithmic $A$-hypergeometric series}
\author{Mutsumi Saito}
\begin{document}

\begin{abstract}
The method of Frobenius is a standard technique to construct series solutions
of an ordinary linear differential equation around a regular singular point.
In the classical case, 
when the roots of the indicial polynomial are separated by 
an integer, logarithmic solutions can be constructed
by means of perturbation of a root.

The method for a regular $A$-hypergeometric system is a theme
of the book by Saito, Sturmfels, and Takayama.
Whereas they perturbed a parameter vector to obtain 
logarithmic $A$-hypergeometric series solutions,
we adopt a different perturbation
in this paper.

\smallskip
\noindent
{\bf Mathematics Subject Classification} (2010): {33C70}

\noindent
{\bf Keywords:} {$A$-hypergeometric systems, the method of Frobenius}
\end{abstract}

\maketitle

\section{Introduction}
The method of Frobenius is a standard technique to construct series solutions
of an ordinary linear differential equation around a regular singular point.
In the classical case, 
when the roots of the indicial polynomial are separated by 
an integer, logarithmic solutions can be constructed
by means of perturbation of a root (cf. e.g. \cite{Ince}).

Let $A=(\a_1,\ldots, \a_n)=(a_{ij})$ be a $d\times n$-matrix
of rank $d$ with coefficients in $\Z$. 
Throughout this paper,
we assume the homogeneity of $A$, i.e.,
we assume that all $\a_j$ belong to one hyperplane off the origin 
in ${\Q}^d$.
Let ${\N}$ be the set of nonnegative integers.
Let $I_A$ denote the toric ideal in the polynomial ring
${\C}[\partial]={\C}[\partial_1,\ldots,\partial_n]$, i.e.,
\begin{equation}
\label{eqn:ToricIdeal}
I_A=\langle \partial^\u-\partial^\v\, :\, A\u=A\v, \, \u, \v\in {\N}^n
\rangle \subseteq {\C}[\partial].
\end{equation}
Here and hereafter we use the multi-index notation;
for example, $\partial^\u$ means $\partial^{u_1}\cdots \partial^{u_n}$
for $\u=(u_1,\ldots,u_n)^T$.
Given a column vector $\bbeta=(\beta_1,\ldots,\beta_d)^T\in {\C}^d$,
let $H_A(\bbeta)$ denote the left ideal of the Weyl algebra
$$
D={\C}\langle x_1,\ldots, x_n,\partial_1,\ldots,
\partial_n\rangle
$$
generated by $I_A$ and 
\begin{equation}
\label{eqn:EulerOperators}
\sum_{j=1}^n a_{ij}\theta_{j} -\beta_i\qquad
(i=1,\ldots, d),
\end{equation}
where $\theta_j =x_j\partial_j$.
The quotient $M_A(\bbeta)=D/H_A(\bbeta)$ is called
the {\it $A$-hypergeometric system with parameter $\bbeta$},
and a formal series annihilated by $H_A(\bbeta)$
an {\it $A$-hypergeometric series with parameter $\bbeta$}.
The homogeneity of $A$ is known to be equivalent to
the regularity of $M_A(\bbeta)$ by Hotta \cite{Hotta} and
Schulze, Walther \cite{SW}.

For a generic parameter $\bbeta$, Gel'fand, Graev, Kapranov, and Zelevinsky 
\cite{GGZ}, \cite{GZK2} constructed
series solutions to $M_A(\bbeta)$.
More generally, Saito, Sturmfels, and Takayama \cite{SST}
constructed logarithm-free series solutions, which we will review in Section 2.
Then they perturbed a parameter vector to construct logarithmic series solutions
\cite[\S 3.5]{SST}.
This is reasonable, 
because this method perturbed the easier equations \eqref{eqn:EulerOperators}
keeping the difficult ones \eqref{eqn:ToricIdeal} unchanged.
Then we can easily obtain perturbed solutions.
However, we need to make a suitable linear combination of
perturbed solutions before taking a limit, and
it is not clear how to describe this linear combination
(except the unimodular case \cite[\S 3.6]{SST}).

Set
\begin{equation}
L:=\Ker_\Z(A)=
\{ \u\in \Z^n\,|\, A\u=\0\}.
\end{equation}
We know that the logarithmic coefficients of $A$-hypergeometric
series solutions are polynomials of $\log x^\b$ $(\b\in L)$
\cite[Proposition 5.2]{LogFree}.
In this paper, we adopt a perturbation by elements of $L$.
At first glance, it does not seem a good idea, 
because we perturb a solution of
the difficult
equations \eqref{eqn:ToricIdeal} keeping the easier ones 
\eqref{eqn:EulerOperators}.
But it turns out that we can evaluate the order of perturbation, and
we can explicitly describe logarithmic series solutions 
(Theorems \ref{thm:1}, \ref{thm2} and Remarks \ref{rmk:Method1}, 
\ref{rmk:Method2}).

In \cite{AS}, Adolphson and  Sperber treated
$A$-hypergeometric series solutions with logarithm
mainly of degree $1$ or $2$, and considered an application
to mirror symmetry.

This paper is organized as follows.
In Sections 2 and 3, following \cite{SST},
we recall some notions on $A$-hypergeometric series.
In particular, we recall fake exponents and negative supports in Section 2, 
and then
we recall that the
fake exponents can be computed 
by the standard pairs of the initial ideal of $I_A$
in Section 3.

In Section 4, for a generic weight $\w$ and a fake exponent $\v$,
we define a set $\NS_\w(\v)$ of negative supports, over which
we consider a series.
Then we introduce a Gale dual and a hyperplane arrangement
 to visualize negative supports.

In Sections 5 and 6, we state the main results
(Theorems \ref{thm:1}, \ref{thm2} and Remarks \ref{rmk:Method1}, 
\ref{rmk:Method2}).
We consider a perturbation by a single element 
and several elements of $L$ in Sections 5 and 6, respectively.
We make one section for the single element case, because it is much easier
to consider.
In both sections, we first consider orders of perturbations
(Lemma \ref{CoefOrder}, Corollaries \ref{cor:CoefOrder},
\ref{cor:a_u:CoefOrder}, and Lemma \ref{lem:a_u:multi}).
Then we see that 
the perturbed solution operated by the difficult ones
\eqref{eqn:ToricIdeal}
has some positive orders,
and we can prove the main results.

Throughout this paper we run three examples (Examples \ref{running1-1},
\ref{running2-1}, and \ref{running3-1}) to illustrate the theory.

\section{logarithm-free canonical $A$-hypergeometric series}

In this section, we recall logarithm-free canonical $A$-hypergeometric
series. For details, see \cite{SST}.

Fix a generic weight vector $\w=(w_1,\ldots,w_n)\in {\R}^n$.
The ideal of the polynomial ring ${\C}[\theta]=
{\C}[\theta_1,\ldots,\theta_n]$ defined by
\begin{equation}
\label{def:fin}
\widetilde{{\rm fin}}_\w(H_A(\bbeta))
:=D\cdot {\rm in}_\w(I_A)\cap {\C}[\theta]+\langle A\theta-\bbeta\rangle
\end{equation}
is called the {\it fake indicial ideal},
where ${\rm in}_\w(I_A)$ denotes the initial ideal of $I_A$ with respect
to $\w$, and
$\langle A\theta-\bbeta\rangle$ denotes the ideal generated by
$\sum_{j=1}^n a_{ij}\theta_j -\beta_i$ ($i=1,\ldots, d$).
Each zero of $\widetilde{{\rm fin}}_\w(H_A(\bbeta))$
is called a {\it fake exponent}.

Let $\w\cdot \u$ denote
$w_1u_1+\cdots +w_nu_n$ for $\u\in {\Q}^n$.
An $A$-hypergeometric series
\begin{equation}
\label{eq:SeriesInW}
x^\v\cdot\sum_{\u\in L}
g_\u(\log x)x^\u
\qquad (g_\u\in {\C}[x]:=\C[x_1,\ldots,x_n])
\end{equation}
is said to be {\it in the direction of $\w$} if
there exists
a basis $\u^{(1)},\ldots, \u^{(n)}$ of
${\Q}^n$ with $\w\cdot \u^{(j)}>0$ 
($j=1,\ldots,n$) such that
$g_\u=0$ whenever $\u\notin \sum_{j=1}^n {\Q}_{\geq 0}\u^{(j)}$.
A fake exponent $\v$ is called an {\it exponent}
if there exists
an $A$-hypergeometric series (\ref{eq:SeriesInW}) 
in the direction of $\w$ 
with nonzero $g_\0$.
Let $\prec$ be the lexicographic order on ${\N}^n$.
Suppose that $\u^{(1)},\ldots, \u^{(n)}$ is a basis as above.
Then a monomial like $x^\v\cdot {\rm in}_\prec (g_\0)(\log x)$ 
in the $A$-hypergeometric series 
\begin{equation}
\label{eq:CanonicalSeries}
x^\v\cdot\sum_{\u\in L\cap \sum_{j=1}^n {\Q}_{\geq 0}\u^{(j)}}
g_\u(\log x)x^\u
\qquad (g_\u\in {\C}[x])
\end{equation}
with nonzero $g_\0$
is called a
{\it starting monomial}.
The $A$-hypergeometric 
series (\ref{eq:CanonicalSeries})
is said to be {\it canonical} with respect to
$\w$ if no starting monomials other than
$x^\v\cdot {\rm in}_\prec (g_0)(\log x)$ appear in the series.

Next we recall logarithm-free $A$-hypergeometric series $\phi_\v$.
For $\v\in {\C}^n$, its {\it negative support} 
${\rm nsupp}(\v)$ is the set of indices
$i$ with $v_i\in {\Z}_{<0}$.
When ${\rm nsupp}(\v)$ is minimal with respect to inclusions
among ${\rm nsupp}(\v+\u)$ with $\u\in L$,
$\v$ is said to have {\it minimal negative support}.
For $\v$ satisfying $A\v=\bbeta$ with minimal negative support,
we define a formal series
\begin{equation}
\label{eqn:CanonicalSeries}
\phi_\v=x^\v\cdot
\sum_{\u\in N_\v}\frac{[\v]_{\u_-}}{[\v+\u]_{\u_+}}x^{\u},
\end{equation}
where 
\begin{equation}
N_\v=\{\, \u\in L\, |\,
 {\rm nsupp}(\v)={\rm nsupp}(\v+\u)\,\},
\end{equation}
and
$\u_+, \u_-\in {\N}^n$ satisfy $\u=\u_+ -\u_-$ with disjoint supports,
and
$[\v]_\u=\prod_{j=1}^n[v_j]_{u_j}=\prod_{j=1}^n v_j(v_j-1)\cdots (v_j-u_j+1)$
for $\u\in {\N}^n$.
Proposition 3.4.13 and Theorem 3.4.14 in \cite{SST}
respectively state that
the series $\phi_\v$ is $A$-hypergeometric, and that
if $\v$ is a fake exponent of $M_A(\bbeta)$, then
$\phi_\v$ is canonical, and $\v$ is an exponent.

\section{Standard pairs and fake exponents}

In this section, we review standard pairs and fake exponents
following \cite{SST}.

Let $M$ be a monomial ideal in $\C[\partial]
=\C[\partial_1,\ldots \partial_n]$.
A pair $(\a, \sigma)$ $(\a\in \N^n, \sigma\subseteq [1,n])$
is {\it standard} if it satisfies
\begin{enumerate}
\item[\rm (1)]
$a_i=0$ for all $i\in \sigma$.
\item[\rm (2)]
For any $\b\in \N^\sigma$, $\partial^\a\partial^\b\notin M$.
\item[\rm (3)]
For any $l\notin \sigma$, there exists $\b\in \N^{\sigma\cup\{ l\}}$
such that $\partial^\a\partial^\b\in M$.
\end{enumerate}

Let $\cS(M)$ denote the set of standard pairs of $M$.
Then, by \cite[Corollary 3.2.3]{SST}, 
$\v$ is a fake exponent of $M_A(\bbeta)$ with respect to
$\w$ if and only if $A\v=\bbeta$ and
there exists a standard pair $(\a, \sigma)\in \cS(\ini_\w(I_A))$
such that
$v_j=a_j$ for all $j\notin \sigma$.

\begin{example}[cf. Example 3.5.3 in \cite{SST}]
\label{running1-1}
Let 
$A=
\begin{pmatrix}
1 & 1 & 1\\
0 & 1 & 2
\end{pmatrix}
$, and take $\w$ so that
$\ini_\w(I_A)=\langle \partial_1\partial_3\rangle$.
Hence
$$
\cS(\ini_\w(I_A))=
\{ (0,*,*),\, (*,*,0)\},
$$
where, for a standard pair $(\a,\sigma)$,
we put $*$ in the place of $\sigma$, $a_j$ at
$j\notin \sigma$.

Let $\bbeta=
\begin{pmatrix}
10\\
8
\end{pmatrix}
$.
Then the fake exponents are $(2, 8, 0)^T$ and $(0, 12, -2)^T$.
Since $(2, 8, 0)^T$ has minimal negative support,
$\phi_{(2, 8, 0)^T}$ is a solution.
\end{example}

\begin{example}
\label{running2-1}
Let 
$A=
\begin{pmatrix}
1 & 1 & 1 & 1\\
0 & 1 & 3 & 4
\end{pmatrix}
$.
Then
$$
I_A=\langle \,
\partial_1^2\partial_3-\partial_2^3,\,
\partial_2\partial_4^2-\partial_3^3,\,
\partial_1\partial_4-\partial_2\partial_3,\,
\partial_1\partial_3^2-\partial_2^2\partial_4\,
\rangle.
$$
Take $\w$ so that
$$
\ini_{\w}I_A
=\langle\, \partial_1^2\partial_3,\,
\partial_2\partial_4^2,\,
\partial_1\partial_4,\,
\partial_1\partial_3^2\,
\rangle.
$$
Hence
$$
\cS(\ini_\w(I_A))=
\{ (0,0,*, *),\,
(0, *, *, 0),\,
(0, *, *, 1),\,
(*, *, 0, 0),\,
(1, *, 1, 0)\}.
$$
Let $\bbeta=
\begin{pmatrix}
-1\\
-2
\end{pmatrix}
$.
Then the fake exponents (with their corresponding standard pairs) are
\begin{itemize}
\item
$\v_{3}:=(0,0,-7,5)^T \quad\leftrightarrow\quad (0,0,*,*)$,
\item
$\v_\emptyset:=(0,-5/2,1/2,0)^T \quad\leftrightarrow\quad (0,*,*,0)$,
\item
$\v_{2,3}:=(0,-2,-1,1)^T \quad\leftrightarrow\quad (0,*,*,1)$,
\item
$\v_{1,2}:=(-1,-1,0,0)^T \quad\leftrightarrow\quad (*,*,0,0)$,
\item
$\v_2:=(1,-4,1,0)^T \quad\leftrightarrow\quad (1,*,1,0)$.
\end{itemize}
Since $\v_\emptyset, \v_2, \v_{3}$ have minimal negative supports,
$\phi_{\v_\emptyset}, \phi_{\v_2}, \phi_{\v_{3}}$
are solutions.
\end{example}

\begin{example}[cf. Example 3.5.2 in \cite{SST}]
\label{running3-1}
Let 
$A=
\begin{pmatrix}
1 & 1 & 1 & 1 & 1\\
-1 & 1 & 1 & -1 & 0\\
-1 & -1 & 1 & 1 & 0
\end{pmatrix}
$. 
Then
$$
I_A=\langle \,
\partial_1\partial_3-\partial_5^2,\,
\partial_2\partial_4-\partial_5^2\,
\rangle.
$$
Take $\w$ so that
$\ini_\w(I_A)=
\langle\, \partial_1\partial_3,\, \partial_2\partial_4\,\rangle$.
Hence
$$
\cS(\ini_\w(I_A))=
\{ (0,0,*,*,*),\, (*,0,0,*,*),\, (*,*,0,0,*),\, (0,*,*,0,*)\}.
$$

Let
$\displaystyle
\bbeta=
\begin{pmatrix}
1\\
0\\
0
\end{pmatrix}.
$
Then $(0, 0, 0, 0,1)^T$ is a unique fake exponent.
Hence $\phi_{(0, 0, 0, 0,1)^T}=x_5$ is a solution.
\end{example}
\section{Negative supports and hyperplane arrangements}

In this section,
first we see that the sum \eqref{eq:SeriesInW} is
taken over a set of negative supports.
Then we define a set $\NS_\w(\v)$ of negative supports,
and we introduce a Gale dual and a hyperplane arrangement
 to visualize negative supports.

For an $A$-hypergeometric series $\phi$ \eqref{eq:SeriesInW},
set
$$
\supp(\phi):=\{ \u \, |\, g_\u\neq 0\}.
$$

\begin{proposition}
\label{lem:nsupp:equiv:class}
Let $\phi$ be an $A$-hypergeometric series \eqref{eq:SeriesInW}.
Suppose that $\u\in \supp(\phi)$ and
$\nsupp(\v+\u)=\nsupp(\v+\u')$ for $\u, \u'\in L$.

Then
$\u'\in \supp(\phi)$.
Furthermore, the highest log-terms of
$x^{\v+\u}$ and $x^{\v+\u'}$ are the same up to
nonzero scalar multiplication.
\end{proposition}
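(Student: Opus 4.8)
The plan is to exploit the fact that the space of $A$-hypergeometric series in the direction of $\w$ is (locally) finite-dimensional and closed under the $D$-action, and to show that the ``second'' monomial $x^{\v+\u'}$ must occur whenever the first one does by using the relations \eqref{eqn:ToricIdeal} together with the numerology of negative supports. The hypothesis $\nsupp(\v+\u)=\nsupp(\v+\u')$ says precisely that $\u-\u'\in N_{\v+\u}$, i.e.\ $\v+\u$ and $\v+\u'$ differ by an element of $L$ preserving negative support, so morally $x^{\v+\u}$ and $x^{\v+\u'}$ lie in the ``same'' $\phi$-block. I would first recall from Section~2 (the construction of $\phi_{\v}$ and Proposition~3.4.13 of \cite{SST}) that for any $\v_0$ with minimal negative support the series $\phi_{\v_0}$ has exactly $N_{\v_0}$ as the index set of its support, and that each logarithm-free $A$-hypergeometric series in the direction of $\w$ is, after grouping, built out of such blocks; so it suffices to treat $\phi$ whose support meets a single negative-support class.

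First I would set $\w':=\v+\u$ and $\w'':=\v+\u'$ and write $\c:=\u'-\u\in L$ with $\nsupp(\w')=\nsupp(\w'+\c)$. Decompose $\c=\c_+-\c_-$ into its positive and negative parts with disjoint supports. The key observation is that, because $\nsupp$ is unchanged, every coordinate of $\w'$ indexed by $\supp(\c_-)$ is a nonnegative integer (it is not a negative integer before, and subtracting the nonnegative integer $(\c_-)_j$ cannot make it one unless it already was, contradicting minimality/constancy of $\nsupp$), so the falling factorial $[\w']_{\c_-}=\prod_j [w'_j]_{(\c_-)_j}$ is a product of factors $w'_j(w'_j-1)\cdots$ which are \emph{nonzero}; similarly the coordinates of $\w'+\c=\w''$ indexed by $\supp(\c_+)$ are nonnegative integers, so $[\w'']_{\c_+}$ is likewise a nonzero integer. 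Now I would apply to $\phi$ the operator $\partial^{\c_-}$ (which by the Euler relations and the toric relations preserves $A$-hypergeometricity up to shift of $\bbeta$, but more simply: act directly and track the coefficient of $x^{\w''-\c_+}=x^{\w'-\c_-}$, using $\partial^{\c_+}\partial^{\v+\u}(\cdots)=\partial^{\c_-}\partial^{\v+\u'}(\cdots)$ modulo $I_A$ since $A\c_+=A\c_-$): the contribution coming from the $x^{\v+\u}$-term carries the nonzero scalar $[\v+\u]_{\c_-}$ while any contribution from $x^{\v+\u'}$ carries $[\v+\u']_{\c_+}\neq 0$, and these are the only two terms of $\phi$ mapping to this monomial among the relevant class (all others differ by an element of $L$ that strictly enlarges $\nsupp$, hence by minimality of $\nsupp(\v+\u)$ they are absent from $\supp(\phi)$). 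Since $x^{\v+\u}$ is present with nonzero coefficient, the relation forces the coefficient of $x^{\v+\u'}$ in $\phi$ to be nonzero as well, i.e.\ $\u'\in\supp(\phi)$; running the same computation one degree up in the log-filtration identifies the highest log-terms of $x^{\v+\u}$ and $x^{\v+\u'}$ as equal up to the nonzero scalar $[\v+\u']_{\c_+}/[\v+\u]_{\c_-}$.

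The main obstacle is the bookkeeping in the logarithmic case: when $g_{\u}$ and $g_{\u'}$ are genuine polynomials in $\log x$ rather than constants, I must make sure that the leading log-term survives the differentiation, i.e.\ that $\partial^{\c_-}$ acting on $x^{\v+\u}g_{\u}(\log x)$ does not annihilate the top-degree part in $\log x$. This is where I would use that on the coordinates in $\supp(\c_-)$ the exponent $w'_j$ is a nonzero constant, so $x_j\partial_j$ acts on $x^{\v+\u}$ by the nonzero scalar $w'_j$ on the leading log-term and the ``derivative-in-$\log$'' correction only contributes to strictly lower log-degree; hence the top log-term is multiplied by $\prod_j [w'_j]_{(\c_-)_j}\neq 0$, and the argument of the previous paragraph goes through verbatim at each log-degree, proving both assertions simultaneously. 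One should also note the symmetric roles of $\u$ and $\u'$: applying the reverse computation with $\partial^{\c_+}$ gives the opposite implication and pins down the scalar precisely, which is the cleanest way to see the ``up to nonzero scalar'' statement.
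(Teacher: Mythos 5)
Your overall strategy matches the paper's: set $\c=\u'-\u$, apply $\partial^{\c_-}$ to the $x^{\v+\u}$-term, check that the falling factorials do not vanish so the leading log-term survives, and invoke the toric relation $\partial^{\c_+}-\partial^{\c_-}\in I_A$ to force $\u'\in\supp(\phi)$; symmetry then gives the scalar statement.

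However, your key middle claim is false: it is \emph{not} the case that ``every coordinate of $\v+\u$ indexed by $\supp(\c_-)$ is a nonnegative integer.'' For $j\in\supp(\c_-)$ there are three possibilities: (a) $j\in\nsupp(\v+\u)=\nsupp(\v+\u')$, so $v_j+u_j$ and $v_j+u'_j$ are \emph{negative} integers with $v_j+u'_j<v_j+u_j<0$, and $[v_j+u_j]_{(\c_-)_j}$ is a nonzero product of negative integers; (b) $j\notin\nsupp(\v+\u)$ and $v_j\notin\Z$, so every factor of the falling factorial is a non-integer; (c) $j\notin\nsupp(\v+\u)$ and $v_j\in\Z$, in which case both $v_j+u_j$ and $v_j+u'_j$ are nonnegative integers and your argument applies. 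You cover only case (c); your parenthetical justification (``it is not a negative integer before, and subtracting \ldots cannot make it one'') presupposes $j\notin\nsupp(\v+\u)$ and thus silently omits case (a), and non-integer $v_j$ falls outside the ``nonnegative integer'' framing altogether. The paper's proof runs exactly this three-fold case analysis. The conclusion $[\v+\u]_{\c_-}\neq 0$ is correct, but the argument you gave does not establish it. Two minor points: the opening reduction to ``$\phi$ whose support meets a single negative-support class'' is neither justified nor needed, and the appeal to ``minimality of $\nsupp(\v+\u)$'' near the end is not an assumption of the proposition and plays no role.
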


\begin{proof}
Suppose that
$x^{\v+\u}p(\log x)$
is the highest log-terms of $x^{\v+\u}$.
Consider
$$
(\u'-\u)=(\u'-\u)_+-(\u'-\u)_-
$$
and
$$
\partial^{(\u'-\u)_-}x^{\v+\u}p(\log x).
$$

Suppose that $u'_j-u_j<0$.
\begin{itemize}
\item
If $j\in \nsupp(\v+\u)=\nsupp(\v+\u')$,
then $v_j+u'_j=v_j+u_j+(u'_j-u_j)<v_j+u_j<0$.
Hence
$[v_j+u_j]_{u_j-u'_j}\neq 0$.
\item
Suppose that $j\notin \nsupp(\v+\u)=\nsupp(\v+\u')$.
If $v_j\notin \Z$, then clearly $[v_j+u_j]_{u_j-u'_j}\neq 0$.
If $v_j\in \Z$,
then $0<v_j+u'_j=v_j+u_j+(u'_j-u_j)<v_j+u_j$.
Hence
$[v_j+u_j]_{u_j-u'_j}\neq 0$.
\end{itemize}
Hence
the highest log-term of
$\partial^{(\u'-\u)_-}(x^{\v+\u}p(\log x))$
is equal to
$$
(\partial^{(\u'-\u)_-}x^{\v+\u})p(\log x),
$$
which is not zero.
Hence $\u'\in \supp(\phi)$.
Do the same argument exchanging $\u$ and $\u'$,
and find that
the highest log-terms of
$x^{\v+\u}$ and $x^{\v+\u'}$ are the same up to
nonzero scalar multiplication.
\end{proof}

Let $\w$ be a generic weight.
Suppose that
$\cG:=\{ {\partial^{\u^{(i)}_+}}-\partial^{\u^{(i)}_-}\,|\, i=1,2,\ldots, r \}$
is a Gr\"obner basis of $I_A$ with respect to $\w$, and that
$\partial^{\u^{(i)}_+}\in \ini_\w (I_A)$ for all $i$.
Set
$$
C(\w):=\sum_{i=1}^r \N \u^{(i)}.
$$

\begin{lemma}[cf. Theorem 6.12.14 in \cite{dojo}]
\label{lem:dojo}
Suppose that $\u\in L$ satisfies $\partial^{\u_+}\in \ini_\w (I_A)$.

Then
$\u\in C(\w)$.
\end{lemma}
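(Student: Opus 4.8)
The plan is to prove the lemma by running the division algorithm for the toric ideal $I_A$ against the Gr\"obner basis $\cG$ and reading off an expansion of $\u$ in terms of the $\u^{(i)}$ from the successive reduction steps. As a preliminary I would replace $\w$ by an honest term order: since $\w$ is generic, $\ini_\w(I_A)$ is a monomial ideal, so refining $\w$ by any monomial order produces a term order $\prec$ on $\C[\partial]$ with $\ini_\prec(I_A)=\ini_\w(I_A)$; then $\cG$ is still a Gr\"obner basis with respect to $\prec$ and each $\partial^{\u^{(i)}_+}$ is the $\prec$-leading monomial of its binomial. This exchanges the $\R$-valued weight for a well-order on monomials, which is what makes the bookkeeping below legitimate.

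The core step is to reduce the monomial $\partial^{\u_+}$ to its normal form $\partial^{\s}$ modulo $\cG$. A single reduction rewrites a monomial $\partial^{\m}$ with $\partial^{\u^{(i)}_+}\mid\partial^{\m}$ as $\partial^{\m-\u^{(i)}_+ +\u^{(i)}_-}=\partial^{\m-\u^{(i)}}$, so following the chain of reductions and tracking exponents gives $\u_+=\s+\sum_{k}\u^{(i_k)}$ with indices $i_k$ (repetitions allowed) coming from $\cG$. Since $\u\in L$ we have $A\u_+=A\u_-$, so $\partial^{\u_+}$ and $\partial^{\u_-}$ lie in the same fibre of $I_A$ and have the same normal form. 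If $\partial^{\u_-}$ is already a standard monomial, that common normal form is $\partial^{\u_-}$ itself, and then $\u=\u_+-\u_-=\sum_{k}\u^{(i_k)}\in C(\w)$. The degenerate case $\u_+=\0$ is immediate, since homogeneity of $A$ forces $\sum_j u_j=0$ for every element of $L$, whence $\u=\0\in C(\w)$.

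So the whole proof comes down to one point: under the hypothesis $\partial^{\u_+}\in\ini_\w(I_A)$, the monomial $\partial^{\u_-}$ is the standard representative of its fibre, i.e. $\partial^{\u_-}\notin\ini_\w(I_A)$. This is where genericity of $\w$ really enters --- it is what forces each fibre to have a unique $\prec$-minimal representative --- and proving it is, I expect, the main obstacle; it is essentially the content of Theorem 6.12.14 in \cite{dojo}. An equivalent, perhaps more self-contained, packaging is an induction on $\partial^{\u_+}$ with respect to $\prec$: choose $i$ with $\partial^{\u^{(i)}_+}\mid\partial^{\u_+}$, set $\u':=\u-\u^{(i)}\in L$, verify that $\partial^{\u'_+}\prec\partial^{\u_+}$ and that $\partial^{\u'_+}$ again lies in $\ini_\w(I_A)$, apply the inductive hypothesis to $\u'$, and conclude $\u=\u'+\u^{(i)}\in C(\w)$ --- the same obstacle reappearing as the stability of the property ``positive part lies in $\ini_\w(I_A)$'' under one reduction step.
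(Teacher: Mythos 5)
Your approach is essentially the same as the paper's: reduce the binomial $\partial^{\u_+}-\partial^{\u_-}$ by the Gr\"obner basis $\cG$ and read off an expansion of $\u$ from the successive reduction steps. The paper's proof is a single sentence and does not discuss the point you flag, namely that the reduction only yields $\u=\u_+-\u_-=\sum_k\u^{(i_k)}\in C(\w)$ if the $\partial^{\u_-}$-side of the binomial is never touched, i.e.\ if $\partial^{\u_-}$ is a standard monomial; otherwise one merely gets $\u$ as a \emph{difference} of two nonnegative combinations of the $\u^{(i)}$.

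You correctly identify that ``$\partial^{\u_-}\notin\ini_\w(I_A)$'' is the crux, but you do not prove it, and you openly say so (``proving it is, I expect, the main obstacle''). So the proposal is an honest and more detailed unpacking of the paper's one-line argument, not a complete proof. The specific justification you offer --- that genericity of $\w$ ``forces each fibre to have a unique $\prec$-minimal representative'' --- is true but does not imply that $\partial^{\u_-}$ is that minimal representative. Nothing in the stated hypothesis rules out $\partial^{\u_-}\in\ini_\w(I_A)$ as well. For instance, in Example~\ref{running2-1} with $\cG=\{\partial_1^2\partial_3-\partial_2^3,\ \partial_2\partial_4^2-\partial_3^3,\ \partial_1\partial_4-\partial_2\partial_3,\ \partial_1\partial_3^2-\partial_2^2\partial_4\}$ (one checks all S-pairs reduce to $0$), take $\u=(1,-3,5,-3)^T\in L$. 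Then $\partial^{\u_+}=\partial_1\partial_3^5$ and $\partial^{\u_-}=\partial_2^3\partial_4^3$ both lie in $\ini_\w(I_A)$, both reduce to the standard monomial $\partial_2^2\partial_3^3\partial_4$, and $\u=\u^{(4)}-\u^{(2)}\notin C(\w)=\N\u^{(4)}\oplus\N\u^{(2)}$ (the first coordinate of every element of $C(\w)$ is nonnegative, while a putative expression with $n_i\geq 0$ forces a negative coefficient). So the step you are leaning on is not just unproved but can actually fail under the hypothesis ``$\partial^{\u_+}\in\ini_\w(I_A)$'' alone; the lemma really needs a hypothesis that pins down $\partial^{\u_-}$ as standard (which is how the lemma is used downstream, via the condition $\w\cdot\u>0$ and further structure). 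Your alternative ``induction on $\partial^{\u_+}$'' packaging has the same hole, as you note: after one reduction step $\u\mapsto\u-\u^{(i)}$, the positive part changes and you must re-establish $\partial^{(\u-\u^{(i)})_+}\in\ini_\w(I_A)$, which is exactly the point in question.

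In short: same route as the paper, the gap you locate is real, but your sketch of how to fill it (genericity of $\w$) does not close it.
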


\begin{proof}
Since 
$\cG$
is a Gr\"obner basis,
$\partial^{\u_+}-\partial^{\u_-}$
is reduced to $0$ by $\cG$.
This means that $\u$ belongs to $\sum_{i=1}^r \N \u^{(i)}$.
\end{proof}

\begin{corollary}
\label{cor:dojo}
Let $\phi$ be an $A$-hypergeometric series with
exponent $\v$ in the direction of $\w$.
Suppose that $\u\in \supp(\phi)$.
Then
$$
\{ \u'\in L\,|\, \nsupp(\v+\u')=\nsupp(\v+\u)\}
\subseteq 
C(\w).
$$
\end{corollary}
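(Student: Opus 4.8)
The plan is to reduce the statement to Lemma \ref{lem:dojo} by showing that any $\u'$ in the displayed set satisfies $\partial^{\u'_+}\in \ini_\w(I_A)$. First I would fix $\u\in\supp(\phi)$ and let $\u'\in L$ with $\nsupp(\v+\u')=\nsupp(\v+\u)$. By Proposition \ref{lem:nsupp:equiv:class}, $\u'\in\supp(\phi)$ as well. The key point to exploit is that $\phi$ is \emph{in the direction of $\w$}: there is a basis $\u^{(1)},\dots,\u^{(n)}$ of $\Q^n$ with $\w\cdot\u^{(j)}>0$ for all $j$, and $g_{\u'}=0$ unless $\u'\in\sum_{j}\Q_{\ge 0}\u^{(j)}$. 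Since $g_{\u'}\neq 0$, we get $\w\cdot\u'\ge 0$, and in fact I want the strict inequality $\w\cdot\u'>0$ whenever $\u'\neq\0$ (for $\u'=\0$ the claim $\0\in C(\w)$ is trivial): this follows because $\u'$ is a nonnegative combination of the $\u^{(j)}$, not all coefficients zero, each contributing a positive amount to $\w\cdot\u'$.

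Next I would translate $\w\cdot\u'>0$ into a statement about initial ideals. Writing $\u'=\u'_+-\u'_-$ with disjoint supports, $\w\cdot\u'>0$ means $\w\cdot\u'_+>\w\cdot\u'_-$, i.e. $\partial^{\u'_+}$ is the $\w$-leading term of the binomial $\partial^{\u'_+}-\partial^{\u'_-}\in I_A$ (the binomial lies in $I_A$ because $A\u'=\0$). Hence $\partial^{\u'_+}\in\ini_\w(I_A)$. I expect one small wrinkle here: $\w$ being generic guarantees $\w\cdot\u'\neq 0$ for nonzero $\u'\in L$, so there is no ambiguity about which monomial is leading — this is why genericity of $\w$ is in the hypothesis and should be invoked explicitly. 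Once $\partial^{\u'_+}\in\ini_\w(I_A)$ is established, Lemma \ref{lem:dojo} immediately gives $\u'\in C(\w)$, which is exactly the desired containment.

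The main obstacle, such as it is, is bookkeeping rather than a genuine difficulty: making sure the "direction of $\w$" hypothesis is used correctly to pass from membership in $\supp(\phi)$ to the strict positivity $\w\cdot\u'>0$, and handling the degenerate case $\u'=\0$ separately. Everything else is a routine chain: Proposition \ref{lem:nsupp:equiv:class} puts $\u'$ in the support, the cone condition forces $\w\cdot\u'>0$, genericity identifies the $\w$-leading monomial of the toric binomial as $\partial^{\u'_+}$, and Lemma \ref{lem:dojo} finishes. I would also remark that this corollary is precisely what lets us view $\supp(\phi)$, equivalently the set of relevant negative supports, as sitting inside the finitely generated monoid $C(\w)$, which is the combinatorial structure the rest of the paper builds on.
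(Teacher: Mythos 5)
Your proposal is correct and follows essentially the same route as the paper: reduce to Lemma~\ref{lem:dojo} by using Proposition~\ref{lem:nsupp:equiv:class} to put $\u'$ into $\supp(\phi)$, then use the direction-of-$\w$ cone condition to get $\w\cdot\u'>0$ for $\u'\neq\0$, and handle $\u'=\0$ trivially. The only difference is cosmetic: the paper leaves the deduction ``$\w\cdot\u'_+>\w\cdot\u'_-\Rightarrow\partial^{\u'_+}\in\ini_\w(I_A)$'' implicit before citing Lemma~\ref{lem:dojo}, whereas you spell it out, which is a minor but welcome clarification.
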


\begin{proof}
We may assume that $w_1,\ldots,w_n$ are linearly independent
over $\Q$.

By Proposition \ref{lem:nsupp:equiv:class},
for $\u'\in L$ with $\nsupp(\v+\u')=\nsupp(\v+\u)$,
we have $\u'\in \supp(\phi)$.
If $\u'=\0$, then clearly $\u'\in C(\w)$.

Let $\u'\neq \0$.
Since $\phi$ is in the direction of $\w$,
we have $\w\cdot \u'>0$, or $\w\cdot \u'_+ >\w\cdot \u'_-$.
By Lemma \ref{lem:dojo},
$\u'$ belongs to $C(\w)$.
\end{proof}

For a fake exponent $\v$, set
\begin{equation}
\NS_\w(\v):=
\left\{ I\,|\, 
\begin{array}{l}
\text{$I=\nsupp(\v+\u)$ for some $\u\in C(\w)$.}\\
\text{If $\nsupp(\v+\u')=
I$, then $\u'\in C(\w)$.}
\end{array}
\right\},
\end{equation}
and
$$
\NS_\w(\v)^c:=\{ \nsupp(\v+\u)\,|\, \u\in L\}\setminus \NS_\w(\v).
$$

\begin{proposition}
\label{cor:nsupp(v)inNS}
Let $\w$ be a generic weight, and $\v$ a fake exponent.
Then

$$
\{ \nsupp(\v+\u)\,|\, \nsupp(\v+\u)\subseteq \nsupp(\v),\, \u\in L\}
\subseteq \NS_\w(\v).
$$
In particular, $\nsupp(\v)\in \NS_\w(\v)$.
\end{proposition}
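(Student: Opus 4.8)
The plan is to reduce the statement to the single claim that \emph{if $\u\in L$ satisfies $\nsupp(\v+\u)\subseteq\nsupp(\v)$, then $\u\in C(\w)$.} Granting this, the proposition follows at once. Given $\u\in L$ with $\nsupp(\v+\u)\subseteq\nsupp(\v)$, the claim applied to $\u$ shows that $I:=\nsupp(\v+\u)$ meets the first requirement in the definition of $\NS_\w(\v)$; and if $\u'\in L$ satisfies $\nsupp(\v+\u')=\nsupp(\v+\u)$, then in particular $\nsupp(\v+\u')\subseteq\nsupp(\v)$, so the claim applied to $\u'$ gives $\u'\in C(\w)$, which is the second requirement. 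Hence $\nsupp(\v+\u)\in\NS_\w(\v)$, and the final assertion is the case $\u=\0$.

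To prove the claim I would dispose of $\u=\0$ immediately ($\0\in C(\w)$) and, for $\u\neq\0$, reduce to showing $\partial^{\u_+}\in\ini_\w(I_A)$, since Lemma~\ref{lem:dojo} then gives $\u\in C(\w)$. As $\v$ is a fake exponent, the standard-pair description recalled in Section~3 provides $(\a,\sigma)\in\cS(\ini_\w(I_A))$ with $v_j=a_j$ for all $j\notin\sigma$. The crucial observation is that $\nsupp(\v+\u)\subseteq\nsupp(\v)$ controls $\u_-$ off $\sigma$: for $j\notin\sigma$ we have $v_j=a_j\in\N$, so $j\notin\nsupp(\v)$, hence $j\notin\nsupp(\v+\u)$, which forces $a_j+u_j\ge 0$, i.e.\ $(\u_-)_j\le a_j$. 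Taking $\b:=(\u_-)|_\sigma\in\N^\sigma$ and recalling that $a_j=0$ for $j\in\sigma$, a coordinatewise check shows that $\partial^{\u_-}$ divides $\partial^\a\partial^\b$; by requirement (2) in the definition of a standard pair $\partial^\a\partial^\b\notin\ini_\w(I_A)$, so $\partial^{\u_-}\notin\ini_\w(I_A)$ because $\ini_\w(I_A)$ is an ideal.

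It then suffices to pass from $\partial^{\u_-}\notin\ini_\w(I_A)$ to $\partial^{\u_+}\in\ini_\w(I_A)$. The element $\partial^{\u_+}-\partial^{\u_-}$ is a nonzero member of $I_A$ because $A\u_+=A\u_-$, and since $\w$ is generic we may assume $w_1,\dots,w_n$ are linearly independent over $\Q$, whence $\w\cdot\u\neq 0$ and $\ini_\w(\partial^{\u_+}-\partial^{\u_-})$ is exactly one of the monomials $\partial^{\u_+}$, $\partial^{\u_-}$. By the previous paragraph it is not $\partial^{\u_-}$, so $\partial^{\u_+}\in\ini_\w(I_A)$ and Lemma~\ref{lem:dojo} yields $\u\in C(\w)$. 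I expect the only genuine subtlety to be the crucial observation above — recognizing that the negative-support containment is precisely what places $\partial^{\u_-}$ beneath the standard-pair monomial $\partial^\a$ (enlarged freely along the coordinates in $\sigma$), hence outside the initial ideal; the remaining steps are routine Gr\"obner-basis bookkeeping.
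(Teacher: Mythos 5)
Your proof is correct, and it arrives at the same destination (applying Lemma~\ref{lem:dojo}) by a slightly different route. The paper establishes the implication $\nsupp(\v+\u)\subseteq\nsupp(\v)\Rightarrow\w\cdot\u>0$ by contraposition: if $\w\cdot\u<0$, then $\partial^{\u_-}\in\ini_\w(I_A)$, so $[\theta]_{\u_-}$ lies in the fake indicial ideal, and therefore $\partial^{\u_-}x^\v=0$ since $\v$ is a fake exponent; this produces an index $j$ with $v_j\in\N$ and $v_j+u_j<0$, i.e.\ $j\in\nsupp(\v+\u)\setminus\nsupp(\v)$. You instead argue directly via the standard-pair characterization from Section~3: the containment $\nsupp(\v+\u)\subseteq\nsupp(\v)$ bounds $(\u_-)_j\le a_j$ off $\sigma$, which places $\partial^{\u_-}$ below a standard monomial and hence outside $\ini_\w(I_A)$, whence $\partial^{\u_+}$ must be the initial term. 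Both proofs ultimately hinge on the observation that $\partial^{\u_-}\notin\ini_\w(I_A)$; the paper gets there through the fake-indicial-ideal (distraction) formulation and you through the standard-pair formulation. The two characterizations are equivalent, so the arguments are close cousins, but yours is direct rather than by contradiction, makes the combinatorics of the standard pair explicit, and avoids invoking the vanishing $\partial^{\u_-}x^\v=0$. Either is acceptable; the paper's version is a bit shorter because it can cite the fake-exponent property in one line.
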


\begin{proof}
We may assume that $w_1,\ldots,w_n$ are linearly independent
over $\Q$.

Let $\u\in L\setminus \{ \0\}$. 
We show 
\begin{equation}
\label{eqn:Prop4.3}
\nsupp(\v+\u)\subseteq \nsupp(\v) \Rightarrow \w\cdot \u>0.
\end{equation}
Suppose that $\w\cdot\u<0$. Then
$\partial^{\u_-} x^\v=0$ since $\v$ is a fake exponent.
Hence there exists $j$ such that $u_j<0$, $v_j\in \N$, and $v_j+u_j<0$.
Namely, $j\in \nsupp(\v+\u)\setminus \nsupp(\v)$,
and we have proved \eqref{eqn:Prop4.3}.
Then by Lemma \ref{lem:dojo}, 
we have $\u\in C(\w)$
(this is also valid for $\u=\0$.).
\end{proof}

\begin{corollary}
\label{cor:M>=m}
Let $\v$ have the smallest $\w$-weight among the set of
fake exponents in $\v+L$.

If $\v+\u_0$ is a fake exponent,
then $\nsupp(\v+\u_0)\in \NS_\w(\v)$.
\end{corollary}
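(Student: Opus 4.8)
First, as in the earlier proofs, I would assume $w_1,\dots,w_n$ are linearly independent over $\Q$, so that $\w\cdot\mu\ne 0$ for every nonzero $\mu\in L$. The plan is then to reduce the claim to a single weight inequality. Unwinding the definition of $\NS_\w(\v)$, it suffices to prove that \emph{every} $\u'\in L$ with $\nsupp(\v+\u')=\nsupp(\v+\u_0)$ belongs to $C(\w)$; the first requirement in the definition of $\NS_\w(\v)$ is then automatic, being witnessed by $\u_0$ itself. For this I would use the reformulation: for $\u\in L$, one has $\u\in C(\w)$ whenever $\u=\0$ or $\w\cdot\u>0$ --- indeed, if $\w\cdot\u>0$ then $\partial^{\u_+}-\partial^{\u_-}\in I_A$ has $\w$-leading term $\partial^{\u_+}$, so $\partial^{\u_+}\in\ini_\w(I_A)$ and Lemma~\ref{lem:dojo} gives $\u\in C(\w)$. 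So it remains to show that $\nsupp(\v+\u')=\nsupp(\v+\u_0)$ forces $\w\cdot\u'\ge 0$.

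To prove that, I would fix such a $\u'$, set $\eta:=\u'-\u_0\in L$ (so $\eta$ preserves the negative support of the fake exponent $\v+\u_0$), and argue by contradiction assuming $\w\cdot\eta<0$. Then, exactly as in the proof of Proposition~\ref{cor:nsupp(v)inNS}, $\partial^{\eta_-}$ is the $\w$-leading term of the binomial $\partial^{\eta_+}-\partial^{\eta_-}\in I_A$, hence $\partial^{\eta_-}\in\ini_\w(I_A)$, and therefore $\partial^{\eta_-}x^{\v+\u_0}=0$ since $\v+\u_0$ is a fake exponent; that is, $[\v+\u_0]_{\eta_-}=0$. Reading off which factor of this product vanishes gives an index $j$ with $(\eta_-)_j\ge 1$ and $(\v+\u_0)_j\in\{0,1,\dots,(\eta_-)_j-1\}$, so $(\v+\u_0)_j$ is a nonnegative integer and $j\notin\nsupp(\v+\u_0)$; on the other hand $(\v+\u')_j=(\v+\u_0)_j+\eta_j=(\v+\u_0)_j-(\eta_-)_j$ is a negative integer, so $j\in\nsupp(\v+\u')=\nsupp(\v+\u_0)$ --- a contradiction. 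Hence $\w\cdot\eta\ge 0$.

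It then only remains to invoke minimality: since $\v$ has the smallest $\w$-weight among the fake exponents in $\v+L$ and $\v+\u_0$ is a fake exponent, $\w\cdot\u_0\ge 0$, whence $\w\cdot\u'=\w\cdot\u_0+\w\cdot\eta\ge 0$, as needed; by the genericity assumption this gives $\u'=\0$ or $\w\cdot\u'>0$, so $\u'\in C(\w)$. The step I expect to be the real crux is the opening reduction --- observing that, via Lemma~\ref{lem:dojo}, $C(\w)$ is just $\{\0\}\cup\{\u\in L:\w\cdot\u>0\}$, so that the combinatorial-looking condition defining $\NS_\w(\v)$ becomes a weight comparison. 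After that, the contradiction in the second paragraph is essentially a transcription of the argument already used for Proposition~\ref{cor:nsupp(v)inNS}, now anchored at $\v+\u_0$ rather than at $\v$.
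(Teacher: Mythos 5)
Your proof is correct and follows essentially the same route as the paper: both establish the key inequality $\w\cdot\u'\geq 0$ for any $\u'\in L$ with $\nsupp(\v+\u')=\nsupp(\v+\u_0)$, by combining the minimality of $\v$ (giving $\w\cdot\u_0\geq 0$) with the argument of Proposition~\ref{cor:nsupp(v)inNS} applied at the fake exponent $\v+\u_0$ (giving $\w\cdot(\u'-\u_0)\geq 0$), and then conclude via Lemma~\ref{lem:dojo}. The only cosmetic difference is that you make explicit the identification $C(\w)=\{\0\}\cup\{\u\in L:\w\cdot\u>0\}$ and treat $\u_0=\0$ and $\u_0\neq\0$ uniformly, whereas the paper splits into those two cases.
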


\begin{proof}
We may assume that $w_1,\ldots,w_n$ are linearly independent
over $\Q$.

If $\u_0=\0$, then the assertion is in Proposition \ref{cor:nsupp(v)inNS}.
Suppose that $\u_0\neq \0$.
By the minimality of $\v$, we have $\w\cdot \u_0>0$.
If $\nsupp(\v+\u_0+\u)=\nsupp(\v+\u_0)$,
then $\w\cdot \u\geq 0$ by \eqref{eqn:Prop4.3}.
Hence $\u_0+\u\in C(\w)$ by Lemma \ref{lem:dojo}, and
$\nsupp(\v+\u_0)\in \NS_\w(\v)$.
\end{proof}

\bigskip
To visualize $\NS_\w(\v)$, we introduce a Gale
dual (cf. e.g. \cite{Zieg}).
Let
$\{ \b_1,\b_2, \ldots, \b_{n-d}\}$ be a basis of $L$.
Set
$$
B:=
(\b_1, \b_2,\ldots, \b_{n-d})=(\g_1,\g_2,\ldots,\g_n)^T.
$$
For $\v\in \C^n$, define
$$
\psi_\v: \R^{n-d}\simeq \v+L_{\R}\subseteq \C^n
$$
by $\psi_\v(\x)=\v+B\x$,
where $L_\R=\R\otimes_\Z L$.
Set $Z(\v):=\{ i\in [1,n]\,|\, v_i\in \Z\}$.
For a subset $I\subseteq Z(\v)$, set
\begin{equation}
N_I(\v):=\{ \v+\u\,|\, \nsupp(\v+\u)=I\}.
\end{equation}
Then
\begin{eqnarray*}
N_I(\v)
&=&
\{ \z\in \v+L\,|\,
z_i< 0\, (i\in I);\,\, z_j\geq 0\, (j\in Z(\v)\setminus I)
\}\\
&=&
\{ \z\in \v+L\,|\,
\e^*_i(\z)< 0\, (i\in I);\,\, \e^*_j(\z)\geq 0\, (j\in Z(\v)\setminus I)
\},
\end{eqnarray*}
where $\{ \e^*_i \,|\, 1\leq i\leq n\}$ is the dual basis of the
standard basis $\{ \e_i \,|\, 1\leq i\leq n\}$ of $\C^n$.
Hence $N_I(\v)$ is the set of lattice points in a union of
faces of the hyperplane arrangement $\{ \e^*_i=0\, |\, i\in Z(\v)\}$
on $\v+L_\R$.
Transfer this to the hyperplane arrangement on $\R^{n-d}$
by $\psi_\v$. Since
\begin{eqnarray*}
(\psi_\v^*(\e^*_i))(\x)
&=&
(\e^*_i)(\psi_\v(\x))=(\e^*_i)(\v +B\x)\\
&=& v_i+(B^T \e_i)(\x)
=v_i+\g_i(\x),
\end{eqnarray*}
we can regard $N_I(\v)$ as the set of lattice points in a union of faces
of the hyperplane arrangement $\{ H_i\,|\, i\in Z(\v)\}$
on $\R^{n-d}$, where
$$
H_i=\{ \x\in \R^{n-d}\,|\, \g_i(\x)+v_i=0\}.
$$

\begin{example}[Continuation of Example \ref{running1-1}]
\label{running1-2}
Let 
$A=
\begin{pmatrix}
1 & 1 & 1\\
0 & 1 & 2
\end{pmatrix}
$, and take $\w$ as before.
Then
$C(\w)=\N(1,-2,1)$.
Let
$$
B=
\begin{pmatrix}
1\\
-2\\
1
\end{pmatrix}
=(\b)=(g_1,g_2,g_3)^T.
$$

Let $\bbeta=
\begin{pmatrix}
10\\
8
\end{pmatrix}
$.
Then the fake exponents are $\v:=(0, 12, -2)^T$ and $\v':=(2, 8, 0)^T$.
We have
$$
\psi_{\v}:\R\ni x\mapsto
\v+x\b=(x,-2x+12,x-2)^T\in \R^3,
$$
and
$$
\nsupp(\v+x\b)=
\left\{
\begin{array}{ll}
\{ 2\} & (x\geq 7)\\
\emptyset & (x=2,3,4,5,6)\\
\{ 3\} & (x=0,1)\\
\{ 1,3\} & (x\leq -1).
\end{array}
\right.
$$
Hence
$$
\NS_\w(\v)=\{ \{ 2\}, \{ 3\}, \emptyset\},\qquad
\NS_\w(\v)^c=\{ \{ 1,3\}\}.
$$
In the following picture, a small arrow indicates the positive side.
Note that a hyperplane (a point in this example) itself belongs 
to its positive side.
\begin{center}
\begin{tikzpicture}
\draw[->] (-5.2,0) -- (5.2,0) node[right] {$x$};
\draw[color=blue] (-3,-0.8) -- (-3,0.8) node[above] {$H_1$};
\draw[color=blue, ->] (-3,0.4) -- (-2.8,0.4);
\draw[color=blue] (-1,-0.8) -- (-1,0.8) node[above] {$H_3$};
\draw[color=blue, ->] (-1,0.4) -- (-0.8,0.4);
\draw[color=blue] (3,-0.8) -- (3,0.8) node[above] {$H_2$};
\draw[color=blue, ->] (3,0.4) -- (2.8,0.4);
\coordinate (-2) at (-5,0) node at (-2) [below] {$-2$}; 
\coordinate (-1) at (-4,0) node at (-1) [below] {$-1$}; 
\coordinate (A) at (-3,0) node at (A) [below left] {$\v$}; 
\coordinate (1) at (-2,0) node at (1) [below] {$1$}; 
\coordinate (B) at (-1,0) node at (B) [below left] {$\v'$}; 
\coordinate (3) at (0,0) node at (3) [below] {$3$}; 
\coordinate (4) at (1,0) node at (4) [below] {$4$}; 
\coordinate (5) at (2,0) node at (5) [below] {$5$}; 
\coordinate (7) at (4,0) node at (7) [below] {$7$}; 
\node (C) at (-4.5, 0.5) {$\{ 1,3\}$};
\node (D) at (-2, 0.5) {$\{ 3\}$};
\node (E) at (1, 0.5) {$\emptyset$};
\node (F) at (4, 0.5) {$\{ 2\}$};
\fill (A) circle (2pt) (B) circle (2pt) (-2) circle (1pt) (-1) circle (1pt) (1) circle (1pt) (3) circle (1pt) (4) circle (1pt) (5) circle (1pt) (7) circle (1pt); 
\end{tikzpicture}
\end{center}
\end{example}

\begin{example}[Continuation of Example \ref{running2-1}]
\label{running2-2}
Let 
$A=
\begin{pmatrix}
1 & 1 & 1 & 1\\
0 & 1 & 3 & 4
\end{pmatrix}
$, and take $\w$ as before.
Then
\begin{eqnarray*}
C(\w)&=&\N(1,-1,-1,1)^T+ \N(2,-3,1,0)^T+
\N(0,1,-3,2)^T+ \N(1,-2,2,-1)^T\\
&=&\N(1,-2,2,-1)^T\oplus \N(0,1,-3,2)^T.
\end{eqnarray*}
Let
$$
B:=
\begin{pmatrix}
1 & 0\\
-2 & 1\\
2 & -3\\
-1 & 2
\end{pmatrix}
=(\b_1,\b_2)
=(\g_1, \g_2, \g_3, \g_4)^T.
$$

Let $\bbeta=
\begin{pmatrix}
-2\\
-1
\end{pmatrix}
$, and
$\v:=\v_{1,2}=(-1,-1,0,0)^T$.
Then
$$
\NS_\w(\v)=\{ \{ 2\},\{ 3\}, \{ 2,3\}, \{ 1,2\}=I_\0\},
$$
$$
\NS_\w(\v)^c=\{ \{ 1,3\},\{ 2,4\}, \{ 1,4\},
\{ 1,3,4\}, \{ 1,2,4\}\}.
$$
In the following picture, we put $I$ in the face
where $\nsupp(\v+x_1\b_1+x_2\b_2)=I$ for a lattice point
$(x_1,x_2)^T$.

\begin{center}
\begin{tikzpicture}
\draw[very thin,color=gray] (-4.5,-4.5) grid (4.5,4.5);
\draw[->] (-5.2,0) -- (5.2,0) node[right] {$x_1$};
\draw[->] (0,-5.2) -- (0,5.2) node[above] {$x_2$};
\draw[color=blue] (1,-5.2) -- (1,5.2) node[above] {$H_1$};
\draw[color=blue] (-3,-5) -- (2,5) node[above] {$H_2$};
\draw[color=blue] plot (\x,0.66*\x) node[right] {$H_3$};
\draw[color=blue] plot (\x,0.5*\x) node[right] {$H_4$};
\coordinate (A) at (0,0) node at (A) [above left] {$\v$}; 

\fill (A) circle (2pt); 
\draw[color=blue, ->] (1,3.8) -- (1.2,3.8);
\draw[color=blue, ->] (0.5, 2) -- (0.3,2.1);
\draw[color=blue, ->] (3,2) -- (3.1, 1.85);
\draw[color=blue, ->] (3,1.5) -- (2.9, 1.7);
\node (E) at (-1.5, 1.5) {$\{ 1,3\}$};
\node (F) at (1.5, 4.7) {$\{ 3\}$};
\node (G) at (3.5, 3.5) {$\{ 2,3\}$};
\node (H) at (4, 2.3) {$\{ 2\}$};
\node (I) at (3.5, 0.5) {$\{ 2,4\}$};
\node (J) at (-0.5, -2.5) {$\{ 1,2,4\}$};
\node (K) at (-2.5, -2.5) {$\{ 1,4\}$};
\node (L) at (-5, -2.8) {$\{ 1,3,4\}$};
\end{tikzpicture}
\end{center}
\end{example}

\begin{example}[Continuation of Example \ref{running3-1}]
\label{running3-2}

\smallskip

Let 
$A=
\begin{pmatrix}
1 & 1 & 1 & 1 & 1\\
-1 & 1 & 1 & -1 & 0\\
-1 & -1 & 1 & 1 & 0
\end{pmatrix}
$, and take $\w$ as before.
Let
$$
B=
\begin{pmatrix}
1 & 0\\
0 & 1\\
1 & 0\\
0 & 1\\
-2 & -2
\end{pmatrix}
=(\b_1,\b_2)
=(\g_1,\ldots,\g_5)^T.
$$

Let
$\displaystyle
\bbeta=
\begin{pmatrix}
1\\
0\\
0
\end{pmatrix}.
$
Then $\v:=(0, 0, 0, 0,1)^T$ is the unique fake exponent.

$$
\NS_\w(\v)=\{ \emptyset, \{ 5\}\},
$$
$$
\NS_\w(\v)^c=\{ \{ 1,3\}, \{ 2,4\}, \{ 1,3,5\}, 
\{ 2,4,5\}, \{ 1,2,3,4\}\}.
$$
\begin{center}
\begin{tikzpicture}
\draw[very thin,color=gray] (-3.5,-3.5) grid (3.5,3.5);
\draw[color=blue, ->] (-4.2,0) -- (4.2,0) node[above] {$H_1=H_3$};
\draw[color=blue, ->] (0,-4.2) -- (0,4.2) node[right] {$H_2=H_4$};
\draw[color=blue] (-3.5, 4) -- (4, -3.5) node[right] {$H_5$};
\coordinate (A) at (0,0) node at (A) [above left] {$\v$}; 

\fill (A) circle (2pt); 
\draw[color=blue, ->] (1.5,0) -- (1.5,0.2);
\draw[color=blue, ->] (0, 1.5) -- (0.2,1.5);
\draw[color=blue, ->] (-0.5,1) -- (-0.65, 0.85);
\node (C) at (4.5, 0) {$x_1$};
\node (D) at (0, 4.5) {$x_2$};
\node (E) at (2.5, 2.5) {$\{ 5\}$};
\node (F) at (3.5, -1.5) {$\{ 1,3,5\}$};
\node (G) at (1.5, -2.5) {$\{ 1,3\}$};
\node (H) at (-2.5, -2.5) {$\{ 1,2,3,4\}$};
\node (I) at (-2.5, 1.5) {$\{ 2,4\}$};
\node (J) at (-1.5, 3.5) {$\{ 2,4,5\}$};
\end{tikzpicture}
\end{center}
\end{example}
\section{Method 1}

In this section, we consider a Frobenius's method
by perturbing an exponent with a single vector in $L$.

\begin{lemma}
\label{CoefOrder}
Let $\b\in L$, $\u\in \N^n$.
Then
\begin{eqnarray*}
[\v+s\b]_\u
&=&
(\!\!\!\!\!\!\!\!\!\!\!\!\!\!\!
\prod_{i\in \nsupp(\v-\u)\setminus\nsupp(\v)}\!\!\!\!\!\!\!
\!\!\!\!\!\!\!\!\!\!
b_i)
\widehat{[\v]}_{\u}s^{|\nsupp(\v-\u)|-|\nsupp(\v)|}\\
&&\qquad +o(s^{|\nsupp(\v-\u)|-|\nsupp(\v)|}),
\end{eqnarray*}
where $\widehat{[\v]}_\u$ is the product of nonzero factors of
$[\v]_\u$;
\begin{eqnarray*}
{\widehat{[\v]}}_{\u}
&=&(\prod_{
i\notin \nsupp(\v-\u)\setminus\nsupp(\v)}[v_i]_{u_i})\\
&&\quad \times
(\prod_{
i\in \nsupp(\v-\u)\setminus\nsupp(\v)}(v_i)!(-1)^{|v_i-u_i+1|}(|v_i-u_i+1|!)).
\end{eqnarray*}
\end{lemma}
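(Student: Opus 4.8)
The plan is to factor $[\v+s\b]_\u=\prod_{j=1}^n[v_j+sb_j]_{u_j}$ and to examine the order in $s$ (as $s\to0$) of each coordinate factor $[v_j+sb_j]_{u_j}$ separately. The starting point is the elementary identification
$$
\nsupp(\v-\u)\setminus\nsupp(\v)=\{\,j\in[1,n]\ :\ 0\le v_j\le u_j-1,\ v_j\in\Z\,\}=\{\,j\ :\ [v_j]_{u_j}=0\,\},
$$
since $u_j\in\N$ forces the condition $v_j-u_j\in\Z_{<0}$ to mean $v_j\in\Z$ and $v_j<u_j$, and removing the cases $v_j<0$ leaves $0\le v_j\le u_j-1$, which is exactly when the factor $[v_j]_{u_j}=v_j(v_j-1)\cdots(v_j-u_j+1)$ has a zero entry. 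The same computation shows $\nsupp(\v)\subseteq\nsupp(\v-\u)$, so that $|\nsupp(\v-\u)\setminus\nsupp(\v)|=|\nsupp(\v-\u)|-|\nsupp(\v)|$; write $m$ for this integer.

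Next I would evaluate the two types of factors. If $j\notin\nsupp(\v-\u)\setminus\nsupp(\v)$, then $[v_j+sb_j]_{u_j}$ is a polynomial in $s$ whose constant term is $[v_j]_{u_j}\ne0$, hence $[v_j+sb_j]_{u_j}=[v_j]_{u_j}+O(s)$. If $j\in\nsupp(\v-\u)\setminus\nsupp(\v)$, i.e. $0\le v_j\le u_j-1$, then in $[v_j+sb_j]_{u_j}=\prod_{k=0}^{u_j-1}(v_j+sb_j-k)$ the $k=v_j$ factor equals $sb_j$; pulling it out leaves $\prod_{0\le k\le u_j-1,\,k\ne v_j}(v_j+sb_j-k)=\prod_{0\le k\le u_j-1,\,k\ne v_j}(v_j-k)+O(s)$. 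The one genuine computation is then the factorial identity
$$
\prod_{\substack{0\le k\le u_j-1\\ k\ne v_j}}(v_j-k)=\Bigl(\prod_{k=0}^{v_j-1}(v_j-k)\Bigr)\Bigl(\prod_{k=v_j+1}^{u_j-1}(v_j-k)\Bigr)=v_j!\cdot(-1)^{|v_j-u_j+1|}\,|v_j-u_j+1|!\,,
$$
where the first product runs over the $v_j$ values $v_j,v_j-1,\ldots,1$ and the second over the $u_j-1-v_j=|v_j-u_j+1|$ values $-1,-2,\ldots,-(u_j-1-v_j)$, using that $v_j-u_j+1\le0$.

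Finally I would multiply the $n$ factors: each of the $m$ indices in $\nsupp(\v-\u)\setminus\nsupp(\v)$ contributes one power of $s$, a factor $b_j$, and a factor converging to $v_j!(-1)^{|v_j-u_j+1|}|v_j-u_j+1|!$, while the remaining indices contribute $[v_j]_{u_j}+O(s)$. Collecting these shows the lowest power of $s$ present is $s^m=s^{|\nsupp(\v-\u)|-|\nsupp(\v)|}$, with coefficient $\bigl(\prod_{i\in\nsupp(\v-\u)\setminus\nsupp(\v)}b_i\bigr)\widehat{[\v]}_\u$, where $\widehat{[\v]}_\u$ is precisely the product of the nonzero factors of $[\v]_\u$ displayed in the statement, everything else being $o(s^m)$. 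If $b_i=0$ for some $i\in\nsupp(\v-\u)\setminus\nsupp(\v)$, the polynomial $[\v+s\b]_\u$ vanishes identically and the formula holds with both sides equal to $o(s^m)$. I do not expect a serious obstacle here: the argument is coordinatewise bookkeeping together with the displayed factorial identity, and the only points needing care are getting the sign and the absolute value $|v_j-u_j+1|$ right in the second product and handling the degenerate case $b_i=0$.
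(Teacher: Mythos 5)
Your proof is correct and follows essentially the same approach as the paper: factor $[\v+s\b]_\u$ coordinatewise, observe that a factor $s$ (tied to $b_i$) appears exactly once in the $i$-th factor precisely when $v_i\in\N$ and $v_i-u_i\in\Z_{<0}$, which is exactly the condition $i\in\nsupp(\v-\u)\setminus\nsupp(\v)$, and collect. The paper's proof is terser; you make explicit the factorial identity behind the displayed formula for $\widehat{[\v]}_\u$ and explicitly address the degenerate case $b_i=0$, but there is no difference in method.
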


\begin{proof}
Note that
$$
[\v+s\b]_\u=\prod_{i=1}^n
(v_i+sb_i)(v_i-1+sb_i)\cdots(v_i-u_i+1+sb_i).
$$
In $(v_i+sb_i)(v_i-1+sb_i)\cdots(v_i-u_i+1+sb_i)$,
the factor $s$ appears if and only if
$v_i\in \N$ and $v_i-u_i\in \Z_{<0}$, and
furthermore if that is the case, it appears only once
and always with $b_i$.

Finally note that $\nsupp(\v-\u)\supseteq \nsupp(\v)$.
Hence
$|\nsupp(\v-\u)\setminus\nsupp(\v)|=
|\nsupp(\v-\u)|-|\nsupp(\v)|$.
\end{proof}

\begin{corollary}
\label{cor:CoefOrder}
Let $\b,\u \in L$.
Suppose that $b_j\neq 0$ for any $j$. Then
\begin{enumerate}
\item
$\ord_s([\v+s\b]_{\u_-})
=|\nsupp(\v+\u)\setminus \nsupp(\v)|$.
\item
$\ord_s([\v+s\b+\u]_{\u_+})
=|\nsupp(\v)\setminus \nsupp(\v+\u)|$.
\end{enumerate}
\end{corollary}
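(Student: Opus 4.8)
The plan is to deduce both statements directly from Lemma~\ref{CoefOrder} by a suitable substitution; the only extra ingredient is an elementary index-by-index comparison of negative supports that exploits the disjointness of the supports of $\u_+$ and $\u_-$.

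For (1), I would apply Lemma~\ref{CoefOrder} with $\u$ replaced by $\u_-\in\N^n$. Since every $b_j$ is nonzero, the product $\prod_{i\in\nsupp(\v-\u_-)\setminus\nsupp(\v)}b_i$ occurring in the leading term is nonzero, and $\widehat{[\v]}_{\u_-}$ is nonzero because it is by definition the product of the nonzero factors of $[\v]_{\u_-}$; hence the leading term in Lemma~\ref{CoefOrder} does not vanish and
\[
\ord_s([\v+s\b]_{\u_-}) = |\nsupp(\v-\u_-)|-|\nsupp(\v)| = |\nsupp(\v-\u_-)\setminus\nsupp(\v)|,
\]
the last equality because $\nsupp(\v)\subseteq\nsupp(\v-\u_-)$ (subtracting the nonnegative integer vector $\u_-$ keeps negative integer entries negative). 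It then remains to prove the set identity $\nsupp(\v-\u_-)\setminus\nsupp(\v) = \nsupp(\v+\u)\setminus\nsupp(\v)$. Here I would fix $i\notin\nsupp(\v)$ and show $i\in\nsupp(\v-\u_-)$ if and only if $i\in\nsupp(\v+\u)$: in either direction the hypothesis forces $v_i\in\Z$ with $v_i\ge 0$, and since $\supp(\u_+)\cap\supp(\u_-)=\emptyset$, whenever $(\u_-)_i>0$ (equivalently $u_i<0$) one has $(\u_+)_i=0$ and hence $v_i+u_i = v_i-(\u_-)_i$, so the two defining inequalities $v_i-(\u_-)_i\in\Z_{<0}$ and $v_i+u_i\in\Z_{<0}$ coincide. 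This yields the displayed identity and finishes (1).

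For (2), rather than repeat the computation I would apply part (1) to the triple $(\v+\u,\ \b,\ -\u)$: here $-\u\in L$, $(-\u)_-=\u_+$ (again by disjointness of the supports of $\u_+$ and $\u_-$), and $(\v+\u)+(-\u)=\v$, so part (1) immediately gives $\ord_s([\v+\u+s\b]_{\u_+}) = |\nsupp(\v)\setminus\nsupp(\v+\u)|$, which is exactly the assertion. The only point requiring genuine care is the case analysis underlying the set identity in step (1) --- one must, for each index $i\notin\nsupp(\v)$, keep track of whether $v_i$ is an integer and, if so, of its sign, and invoke the disjointness of $\supp(\u_+)$ and $\supp(\u_-)$ at the right moment; everything else is a mechanical substitution into Lemma~\ref{CoefOrder}.
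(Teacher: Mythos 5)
Your proof is correct and follows essentially the same route as the paper's: apply Lemma~\ref{CoefOrder} with $\u_-$ in place of $\u$, convert the cardinality difference to the cardinality of a set difference via $\nsupp(\v)\subseteq\nsupp(\v-\u_-)$, verify the set identity $\nsupp(\v-\u_-)\setminus\nsupp(\v)=\nsupp(\v+\u)\setminus\nsupp(\v)$ using disjointness of $\supp(\u_+)$ and $\supp(\u_-)$, and deduce (2) from (1) by the substitution $(\v,\u)\mapsto(\v+\u,-\u)$. The only cosmetic difference is that you phrase the set identity as an index-by-index equivalence while the paper proves the two inclusions separately; the content is identical.
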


\begin{proof}
(1)\quad
By Lemma \ref{CoefOrder},
$\ord_s([\v+s\b]_{\u_-})
=|\nsupp(\v-\u_-)\setminus \nsupp(\v)|$.

We have $\nsupp(\v+\u)=\nsupp(\v+\u_+-\u_-)
\subseteq \nsupp(\v-\u_-)$.
Hence $(\nsupp(\v+\u)\setminus \nsupp(\v))\subseteq
\nsupp(\v-\u_-)\setminus \nsupp(\v)$.

Let $j\in \nsupp(\v-\u_-)\setminus \nsupp(\v)$.
Then $u_j<0$. Hence $(\u_+)_j=0$, and
$j\in \nsupp(\v-\u_-+\u_+)\setminus \nsupp(\v)$.
Therefore $\nsupp(\v-\u_-)\setminus \nsupp(\v)
=\nsupp(\v+\u)\setminus \nsupp(\v)$.

(2)\quad
In (1), replace $\v, \u$ by
$\v+\u, -\u$, respectively.
\end{proof}

\begin{corollary}
\label{cor:a_u:CoefOrder}
Let $\b,\u \in L$.
Suppose that $b_j\neq 0$ for any $j$. 
Let
$$
a_\u(s):=\frac{[\v+s\b]_{\u_-}}{[\v+s\b+\u]_{\u_+}}.
$$
Then
$$
\ord_s(a_\u(s))
=|\nsupp(\v+\u)|- |\nsupp(\v)|.
$$
Indeed,
\begin{eqnarray*}
a_\u(s)
&=&
\frac{\prod_{i\in \nsupp(\v+\u)\setminus\nsupp(\v)}b_i}{\prod_{j\in 
\nsupp(\v)\setminus\nsupp(\v+\u)}b_j}
\frac{{\widehat{[\v]}}_{\u_-}}{{\widehat{[\v+\u]}}_{\u_+}}
s^{|\nsupp(\v+\u)|-|\nsupp(\v)|}\\
&&\qquad\qquad\qquad +o(s^{|\nsupp(\v+\u)|-|\nsupp(\v)|}).
\end{eqnarray*}
\end{corollary}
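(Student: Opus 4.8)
The plan is to derive this Corollary directly from Corollary~\ref{cor:CoefOrder} together with the explicit leading-coefficient formula in Lemma~\ref{CoefOrder}, simply by forming the quotient. First I would write $a_\u(s)=[\v+s\b]_{\u_-}/[\v+s\b+\u]_{\u_+}$ and apply Corollary~\ref{cor:CoefOrder}(1) to the numerator and Corollary~\ref{cor:CoefOrder}(2) to the denominator, obtaining
$$
\ord_s(a_\u(s))=|\nsupp(\v+\u)\setminus\nsupp(\v)|-|\nsupp(\v)\setminus\nsupp(\v+\u)|.
$$
The right-hand side equals $|\nsupp(\v+\u)|-|\nsupp(\v)|$ by the elementary identity $|X\setminus Y|-|Y\setminus X|=|X|-|Y|$ for finite sets $X,Y$, which takes one line. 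This establishes the order statement.

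Next I would extract the leading coefficient. Applying Lemma~\ref{CoefOrder} with $\u$ replaced by $\u_-$, the leading term of $[\v+s\b]_{\u_-}$ is
$$
\Bigl(\prod_{i\in\nsupp(\v-\u_-)\setminus\nsupp(\v)}b_i\Bigr)\,\widehat{[\v]}_{\u_-}\,s^{|\nsupp(\v-\u_-)|-|\nsupp(\v)|},
$$
and I would invoke the identity $\nsupp(\v-\u_-)\setminus\nsupp(\v)=\nsupp(\v+\u)\setminus\nsupp(\v)$ established inside the proof of Corollary~\ref{cor:CoefOrder}(1) to rewrite the product index set as $\nsupp(\v+\u)\setminus\nsupp(\v)$. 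Similarly, applying Lemma~\ref{CoefOrder} to $[\v+\u+s\b]_{\u_+}$ — i.e. with $\v,\b,\u$ replaced by $\v+\u,\b,(-\u)_-=\u_+$ — the leading term is
$$
\Bigl(\prod_{j\in\nsupp(\v+\u-\u_+)\setminus\nsupp(\v+\u)}b_j\Bigr)\,\widehat{[\v+\u]}_{\u_+}\,s^{|\nsupp(\v+\u-\u_+)|-|\nsupp(\v+\u)|},
$$
and the analogue of the set identity (the one used in proving Corollary~\ref{cor:CoefOrder}(2), obtained by the stated substitution $\v,\u\mapsto\v+\u,-\u$) gives $\nsupp(\v+\u-\u_+)\setminus\nsupp(\v+\u)=\nsupp(\v)\setminus\nsupp(\v+\u)$.

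Finally I would divide the two leading terms. The $b_i$-factors combine into $\prod_{i\in\nsupp(\v+\u)\setminus\nsupp(\v)}b_i\big/\prod_{j\in\nsupp(\v)\setminus\nsupp(\v+\u)}b_j$, the $\widehat{[\,\cdot\,]}$-factors combine into $\widehat{[\v]}_{\u_-}\big/\widehat{[\v+\u]}_{\u_+}$, and the powers of $s$ combine, again via $|X\setminus Y|-|Y\setminus X|=|X|-|Y|$, into $s^{|\nsupp(\v+\u)|-|\nsupp(\v)|}$, matching the claimed formula; the error term is $o$ of that power since both numerator and denominator are the sum of their leading term plus a strictly higher-order remainder, and the denominator's leading coefficient is nonzero (all $b_j\neq0$ and the $\widehat{[\,\cdot\,]}$ are products of nonzero factors), so the quotient expands as leading term plus $o(\cdot)$. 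There is no real obstacle here: the only mildly delicate point is keeping track of which set-difference identity is being used in the denominator and confirming it is exactly the one obtained from Corollary~\ref{cor:CoefOrder}(2) via the substitution indicated there, so that I may cite it rather than reprove it.
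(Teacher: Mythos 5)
Your proposal is correct and matches the paper's argument: the paper proves this corollary by the same one-line set identity $|X\setminus Y|-|Y\setminus X|=|X|-|Y|$ and then declares the statement to follow from Lemma~\ref{CoefOrder} and Corollary~\ref{cor:CoefOrder}. You have merely spelled out the quotient computation and the index-set rewritings (via the substitutions already noted in Corollary~\ref{cor:CoefOrder}) that the paper leaves implicit.
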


\begin{proof}
For finite sets $X$ and $Y$,
\begin{eqnarray*}
|X\setminus Y|-|Y\setminus X|
&=&
|X\setminus Y|+|X\cap Y|-(|Y\setminus X|+|X\cap Y|)\\
&=&|X|-|Y|.
\end{eqnarray*}
Hence the statement follows from Lemma \ref{CoefOrder}
and Corollary \ref{cor:CoefOrder}.
\end{proof}

\bigskip

Let $\v$ be a fake exponent.
For $\b\in L$ with $b_i\neq 0$ ($i\in \nsupp(\v)$), set
$$
F_\b(x,s):=
\sum_{\nsupp(\v+\u)\in \NS_\w(\v)}
a_\u(s) x^{\v+s\b+\u},
$$
where
$$
a_\u(s)=\frac{[\v+s\b]_{\u_-}}{[\v+s\b+\u]_{\u_+}}.
$$
The condition $b_i\neq 0$ ($i\in \nsupp(\v)$)
guarantees the denominator of $a_\u(s)$ not to be zero
by Corollary \ref{cor:a_u:CoefOrder}.

Set $I_\u:=\nsupp(\v+\u)$ for $\u\in L$.

\begin{theorem}
\label{thm:1}
Let $\v$ be a fake exponent.
Put
$m:=\min_{I\in \NS_\w(\v)}|I|$ and 
$M:=\min_{I\in \NS_\w(\v), J\in \NS_\w(\v)^c}(|I\cup J|)$.
Since
$I_\0\in \NS_\w(\v)$ (Proposition \ref{cor:nsupp(v)inNS}), 
we have $|I_\0|\geq m$.
Let $\b\in L$ satisfy $b_i\neq 0$ for $i\in \nsupp(\v)$. Then

\begin{enumerate}
\item[\rm (1)]
$
(\partial_s^j s^{|I_\0|-m}F_\b(x,s))_{|s=0}\quad (j=0,1,\ldots, M-m-1)
$
are solutions to $M_A(\bbeta)$.

If $M >|I_\0|$, then $\v$ is an exponent with multiplicity
 at least
$\binom{n-d+M-|I_\0|-1}{M-|I_\0|-1}$.

\item[\rm (2)]
If
$\b^{(1)}, \b^{(2)},\ldots \b^{(k)}\in L$
satisfy
$$
\sum_{i=1}^{k}
\frac{b^{(i)}_{I\setminus I_\0}b^{(i)}_{J\setminus I}}{b^{(i)}_{I_\0\setminus I}}=0
$$
for all $I\in \NS_\w(\v), J\in \NS_\w(\v)^c$ with 
$|I\cup J|=M$,
then
$$
(\partial_s^{M-m} s^{|I_\0|-m}\sum_{i=1}^kF_{\b^{(i)}}(x,s))_{|s=0}
$$
is also a solution to $M_A(\bbeta)$,
where
$b_{K}=\prod_{k\in K}b_k$.
If $M\geq |I_\0|$, then $\v$ is an exponent.
\end{enumerate}
\end{theorem}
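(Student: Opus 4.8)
The plan is to show that applying $H_A(\bbeta)$ to the perturbed series $F_\b(x,s)$ produces something of strictly positive $s$-order, so that after dividing by the appropriate power of $s$ and differentiating, the limit $s\to 0$ is annihilated by $H_A(\bbeta)$. First I would check the Euler operators: since $A(\v+s\b+\u)=A\v=\bbeta$ for all $\u\in L$ (as $\b,\u\in L=\Ker_\Z A$), each Euler operator \eqref{eqn:EulerOperators} kills every monomial $x^{\v+s\b+\u}$ individually, hence kills $F_\b(x,s)$ and all its $s$-derivatives and all linear combinations $\sum_i F_{\b^{(i)}}$. So the only real content is the toric ideal $I_A$.

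Next I would analyze the action of a binomial $\partial^{\u^{+}}-\partial^{\u^{-}}$ (with $A\u^{+}=A\u^{-}$, so $\u^{+}-\u^{-}\in L$) on $F_\b(x,s)$. The key computational fact is that the coefficients $a_\u(s)$ are built precisely so that, were the sum taken over all of $L$ (or over a full $N_\v$-type set), the binomial would annihilate the series up to the "boundary" terms where $\nsupp$ jumps. Concretely, $\partial^{\u^{+}}$ applied to $a_\u(s)x^{\v+s\b+\u}$ and $\partial^{\u^{-}}$ applied to $a_{\u+\u^{+}-\u^{-}}(s)x^{\v+s\b+\u+\u^{+}-\u^{-}}$ land on the same monomial, and their coefficients agree whenever $\nsupp(\v+\u)=\nsupp(\v+\u+\u^{+}-\u^{-})$ — this is the standard logarithm-free cancellation from \cite[Prop.~3.4.13]{SST}, now carried along with the parameter $s$. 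The surviving terms are those where passing from one negative support to the other changes $\nsupp$, i.e. terms indexed by pairs $(I,J)$ with $I\in\NS_\w(\v)$, $J\in\NS_\w(\v)^c$. By Corollary~\ref{cor:a_u:CoefOrder}, each such surviving coefficient has $s$-order $|I\cup J|-|I_\0|$ in part~(1)'s setup, hence $s$-order at least $M-|I_\0|=(M-m)-(|I_\0|-m)$. (Here one must also use Lemma~\ref{lem:dojo}/Corollary~\ref{cor:dojo} to be sure that the only $\u$ contributing to $F_\b$ lie in $C(\w)$, so that the binomials of the Gröbner basis act within the indexing set in a controlled way, and the "jump" terms are genuinely indexed by $\NS_\w(\v)^c$.)

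Granting that, part~(1) follows: $(\partial^{\u^{+}}-\partial^{\u^{-}})(s^{|I_\0|-m}F_\b(x,s))$ has $s$-order $\geq M-m$, so its first $M-m$ Taylor coefficients in $s$ vanish; differentiating $j$ times for $j\leq M-m-1$ and setting $s=0$ kills it, so $(\partial_s^j s^{|I_\0|-m}F_\b)_{|s=0}$ is $I_A$-annihilated, hence $H_A(\bbeta)$-annihilated. These $M-m$ solutions are linearly independent (their leading terms involve $\log$-powers $1,\log x^\b,\ldots,(\log x^\b)^{M-m-1}$ via $\partial_s^j x^{s\b}|_{s=0}=(\log x^\b)^j x^\0$ times the leading monomial), and when $M>|I_\0|$ one gets $M-|I_\0|$ of them sharing the starting monomial $x^\v$, giving multiplicity — the binomial coefficient count comes from the several-variable refinement in Section~6, or more simply here from counting the $j$'s together with the $n-d$ directions $\b$, which I would defer to Remark~\ref{rmk:Method1}. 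For part~(2), the point is that the $\partial_s^{M-m}$ coefficient of $s^{|I_\0|-m}F_\b$ need not vanish, but its obstruction is exactly the collection of leading coefficients $\frac{b_{I\setminus I_\0}b_{J\setminus I}}{b_{I_\0\setminus I}}\widehat{[\v]}_{\cdots}/\widehat{[\v+\u]}_{\cdots}$ over the pairs $(I,J)$ with $|I\cup J|=M$; summing over $\b^{(i)}$ with the stated linear relation makes every such obstruction cancel (the $\widehat{[\v]}$-factors are independent of $i$ and factor out), so $(\partial_s^{M-m}s^{|I_\0|-m}\sum_i F_{\b^{(i)}})_{|s=0}$ is again $I_A$-annihilated; its starting monomial is a nonzero multiple of $x^\v$ when $M\geq|I_\0|$, proving $\v$ is an exponent.

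The main obstacle will be the bookkeeping in the cancellation step: one has to track, for each pair of adjacent terms joined by a Gröbner-basis binomial $\u^{(i)}$, exactly which factors in $a_\u(s)$ are "new zeros" (those contributing a power of $s$, indexed by $\nsupp$ differences) versus genuinely nonzero, and confirm that the matching identity $a_{\u+\u^{(i)}}(s)\cdot(\text{Pochhammer factor from }\partial^{\u^{(i)}_-}) = a_\u(s)\cdot(\text{factor from }\partial^{\u^{(i)}_+})$ holds verbatim with $s$ present — this is the $s$-deformed version of the computation in \cite[\S3.4]{SST}, and the clean way to organize it is via Lemma~\ref{CoefOrder}'s formula $\widehat{[\v+s\b]}_\u=(\prod b_i)\widehat{[\v]}_\u s^{(\cdots)}+o(\cdots)$ applied on both sides. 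Once that identity is in hand, the order estimates are immediate from Corollary~\ref{cor:a_u:CoefOrder} and the definition of $M$, and both parts fall out.
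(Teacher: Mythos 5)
Your proposal is correct and follows essentially the same route as the paper: trivial annihilation by the Euler operators since $\b\in L$, telescoping cancellation of a toric binomial $\partial^{\u'_+}-\partial^{\u'_-}$ between adjacent terms with negative supports both in $\NS_\w(\v)$ (the $s$-deformed version of \cite[(3.29)]{SST}), order estimate $|I\cup J|-|I_\0|\geq M-|I_\0|$ on the surviving boundary terms via Corollary~\ref{cor:a_u:CoefOrder} and Lemma~\ref{CoefOrder}, and the cancellation of leading coefficients $b_{I\setminus I_\0}b_{J\setminus I}/b_{I_\0\setminus I}$ for part~(2). The only slight muddle is in the multiplicity count: the binomial coefficient is obtained directly (not via Remark~\ref{rmk:Method1}) by observing that the starting terms $x^\v(\log x^\b)^k$, $k=0,\ldots,M-|I_\0|-1$, together with the freedom to vary $\b$ over the rank-$(n-d)$ lattice $L$, produce at least $\binom{n-d+M-|I_\0|-1}{M-|I_\0|-1}$ linearly independent solutions with starting monomial $x^\v$; but your alternative phrasing ("counting the $j$'s together with the $n-d$ directions $\b$") captures the same idea.
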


\begin{proof}
First of all, since $\b\in L$,
we have
$$
(\sum_{j=1}^n a_{ij}\theta_{j} -\beta_i)
F_\b(x,s)=0
\qquad
(i=1,\ldots, d).
$$

Let $\u'\in L$.

Suppose that
$I_\u, I_{\u+\u'}\in \NS_\w(\v)$.
Then as in \cite[(3.29)]{SST}
$$
\partial^{\u'_-}(a_\u(s)x^{\v+s\b+\u})
=\partial^{\u'_+}(a_{\u+\u'}(s)x^{\v+s\b+\u+\u'}).
$$
Hence
\begin{eqnarray*}
&&
(\partial^{\u'_+}-\partial^{\u'_-})
F_\b(x,s)\\
&=&
\sum_{I_\u\in \NS_\w(\v),\,
I_{\u-\u'}\in \NS_\w(\v)^c}
\partial^{\u'_+}(a_\u(s)x^{\v+s\b+\u})\\
&&\quad -
\sum_{I_\u\in \NS_\w(\v),\,
I_{\u+\u'}\in \NS_\w(\v)^c}
\partial^{\u'_-}(a_\u(s)x^{\v+s\b+\u})\\
&=&
\sum_{I_\u\in \NS_\w(\v),\,
I_{\u-\u'}\in \NS_\w(\v)^c}
\partial^{\u'_+}(a_\u(s)x^{\v+s\b+\u})\\
&&\quad -
\sum_{I_\u\in \NS_\w(\v),\,
I_{\u-(-\u')}\in \NS_\w(\v)^c}
\partial^{(-\u')_+}(a_\u(s)x^{\v+s\b+\u}).
\end{eqnarray*}

Suppose that $I_\u\in \NS_\w(\v)$ and
$J:=I_{\u-\u'}\in \NS_\w(\v)^c$.
Then
$u'_j>0\, (j\in J\setminus I_\u)$, and
$$
\partial^{\u'_+}(a_\u(s)x^{\v+s\b+\u})
=
a_\u(s)[\v+s\b+\u]_{\u'_+}x^{\v+s\b+\u-\u'_+}.
$$
By Corollary \ref{cor:a_u:CoefOrder},
$$
a_\u(s)=
c\frac{b_{I_\u\setminus I_\0}}{b_{I_\0\setminus I_\u}}s^{|I_\u|-|I_\0|}
+\text{higher terms},
$$
where
$c$ is a  nonzero constant unrelated to $s$ and $b_j$'s.
Hence $s^{|I_\0|-m}a_\u(s)$ does not have a pole at $s=0$.

By Lemma \ref{CoefOrder},
$$
[\v+s\b+\u]_{\u'_+}=c' b_{J\setminus I_\u}s^{|J\setminus I_\u|}+
\text{higher terms}.
$$
Hence, 
\begin{eqnarray}
\partial^{\u'_+}(a_\u(s)x^{\v+s\b+\u})
&=&
c\frac{b_{I_\u\setminus I_\0}b_{J\setminus I_\u}}{b_{I_\0\setminus I_\u}}
s^{|I_\u|-|I_\0|}s^{|J\setminus I_\u|}
+\text{higher terms}\nonumber\\
&=&
c\frac{b_{I_\u\setminus I_\0}b_{J\setminus I_\u}}{b_{I_\0\setminus I_\u}}
s^{|I_\u\cup J|-|I_\0|}
+\text{higher terms}.
\label{eqn:ExternalArea2}
\end{eqnarray}

Thus each coefficient of 
$\partial^{\u'_+}s^{|I_\0|-m}(a_\u(s)x^{\v+s\b+\u})$
has order at least $M-m$ in $s$, and we have proved the first part of (1).
By looking at the coefficient of \eqref{eqn:ExternalArea2}, 
we have (2).

Note that the starting part of 
$(\partial_s^{|I_\0|-m+k} s^{|I_\0|-m}F_\b(x,s))_{|s=0}$ is a 
nonzero multiple of $x^\v(\log x^\b)^k$
$(k=0,1,\ldots, M-|I_\0|-1)$.
Since $\rank\, L=n-d$, we have the second part of (1).
\end{proof}

\begin{remark}
Since the degrees of logarithmic polynomials in the coefficients of
$(\partial_s^j s^{|I_\0|-m}F_\b(x,s))_{|s=0}$ are less than or equal to 
$j$,
$$
(\partial_s^j s^{|I_\0|-m}F_\b(x,s))_{|s=0}\qquad
(j= |I_\0|-m, \ldots, M-m-1)
$$
in Theorem \ref{thm:1} (1) are basic Nilsson series solutions
\cite[Definition 2.6]{DMM}.
\end{remark}

\begin{remark}
\label{rmk:Method1}
In Theorem \ref{thm:1}, we may replace $\NS_\w(\v)$
and $\NS_\w(\v)^c$ by any $N\subseteq \NS_\w(\v)$ with $N\ni\nsupp(\v)$
and $N^c:=(\NS_\w(\v)\cup \NS_\w(\v)^c)\setminus N$.
Indeed, the proof of Theorem \ref{thm:1} is also valid
for $N$ and $N^c$.
\end{remark}

\begin{example}
Let $\v$ be a fake exponent with minimal negative support,
and $N:=\{ \nsupp(\v)\}$.

Then for any $J\in N^c$ we have $I_\0\cup J\supsetneq I_\0$.
Hence $M>|I_\0|=m$, and we see that
$\phi_\v$ is a solution with exponent $\v$
by Theorem \ref{thm:1} for $N$.
\end{example}

\begin{example}[Continuation of Example \ref{running1-2}]
\label{running1-3}
Let 
$A=
\begin{pmatrix}
1 & 1 & 1\\
0 & 1 & 2
\end{pmatrix}
$, and take $\w$ as before.

Let $\bbeta=
\begin{pmatrix}
10\\
8
\end{pmatrix}
$, and
$\v:=(0, 12, -2)^T$.

Then
$$
\NS_\w(\v)=\{ \{ 2\}, \{ 3\}, \emptyset\},\qquad
\NS_\w(\v)^c=\{ \{ 1,3\}\}.
$$
Hence
$$
M=2,\quad m=0,\quad I_0=\{ 3\},
$$
and by Theorem \ref{thm:1}
$(\partial^j_s sF_\b(x,s))_{|s=0}$ $(j=0,1)$ are solutions.
Here
$$
(\partial^0_s sF_\b(x,s))_{|s=0}
=c \phi_{\v'},
$$
where $\b=(1,-2,1)^T$, $\v'=(2,8,0)^T=\v+2\b$, and 
$$
c=(sa_{2\b}(s))_{|s=0}=(\frac{s[12-2s]_4}{[s+2]_2[s]_2})_{|s=0}
=-5940.
$$
$$
(\partial_s sF_\b(x,s))_{|s=0}=
\sum_{k\neq 2,3,\ldots,6}a_{k\b}(0)x^{\v+k\b}
+\sum_{k=2}^6(sa_{k\b}(s))_{|s=0}(\log x^\b) x^{\v+k\b}.
$$
Here note that $a_{k\b}(s)$ has a pole of order $1$ at $s=0$
for $k=2,3,4,5,6$
by Corollary \ref{cor:a_u:CoefOrder}.

Note that the $\psi(0,x)$ in \cite[Example 3.5.3]{SST} has
a typo.
\end{example}

\begin{example}[Continuation of Example \ref{running2-2}]
\label{running2-3}
Let 
$A=
\begin{pmatrix}
1 & 1 & 1 & 1\\
0 & 1 & 3 & 4
\end{pmatrix}
$, and take $\w$ as before.

Let $\bbeta=
\begin{pmatrix}
-2\\
-1
\end{pmatrix}
$,
and $\v:=\v_{1,2}=(-1,-1,0,0)^T$.
Then
$$
\NS_\w(\v)=\{ \{ 2\},\{ 3\}, \{ 2,3\}, \{ 1,2\}=I_\0\},
$$
$$
\NS_\w(\v)^c=\{ \{ 1,3\},\{ 2,4\}, \{ 1,4\},
\{ 1,3,4\}, \{ 1,2,4\}\}.
$$
Hence
$M=2, m=1, I_\0=\{ 1,2\}$.
By Theorem \ref{thm:1} (1),
$(sF_\b(x,s))_{s=0}$ is a solution for any $\b\in L$ with $b_1,b_2\neq 0$.
We have $\v_2=\v+(2,-3,1,0)^T$ and
$\v_3=\v+(1,1,-7,5)^T$.
Hence by Corollary \ref{cor:a_u:CoefOrder}
\begin{eqnarray*}
(sF_\b(x,s))_{s=0}
&=&
(sa_{(2,-3,1,0)^T}(s))_{s=0}\phi_{\v_2}+
(sa_{(1,1,-7,5)^T}(s))_{s=0}\phi_{\v_3}\\
&=&
-\frac{6}{b_1}\phi_{\v_2}
+\frac{6b_3}{b_1b_2}\phi_{\v_3}.
\end{eqnarray*}

The $(I,J)$'s with the condition of Theorem \ref{thm:1} (2) are
$$
( \{ 3\}, \{ 1,3\}),\quad (\{ 2\}, \{ 2,4\}).
$$
Hence
if $\b^{(1)}, \b^{(2)}\in L$ satisfy
$$
\sum_{i=1}^2 \frac{b^{(i)}_3b^{(i)}_1}{b^{(i)}_1b^{(i)}_2}=
\sum_{i=1}^2 \frac{b^{(i)}_3}{b^{(i)}_2}=
0,\quad
\sum_{i=1}^2 \frac{b^{(i)}_4b^{(i)}_1}{b^{(i)}_1}=
\sum_{i=1}^2 b^{(i)}_4
=0,
$$
then
$(\partial_s\sum_{i=1}^2 sF_{\b^{(i)}})_{|s=0}$
is a solution.
We see that
$$
\b^{(1)}=(1,-2,2,-1)^T, \quad \b^{(2)}=(1,-1,-1,1)^T
$$
would do.
Hence $\v_{1,2}$ is an exponent, and
\begin{eqnarray*}
&&
(\partial_s \sum_{i=1}^2sF_{\b^{(i)}}(x,s))_{|s=0}\\
&=&
(\partial_s(\sum_{i=1}^2 \sum_{\nsupp(\v+\u)\in \NS_\w(\v)}
sa_\u^{(i)}(s)x^{\v+s\b^{(i)}+\u})_{|s=0}\\
&=&
\sum_{\nsupp(\v+\u)=\{ 1,2\}, \{ 2,3\}}
(a_\u^{(1)}(0)+a_\u^{(2)}(0))x^{\v+\u}\\
&&\quad+
\sum_{\nsupp(\v+\u)=\{ 2\}, \{ 3\}}
(\partial_s(sa_\u^{(1)}(s)+sa_\u^{(2)}(s)))_{|s=0}x^{\v+\u}\\
&&\quad
+\sum_{\nsupp(\v+\u)=\{ 2\}, \{ 3\}}
(sa_\u^{(1)}(s))_{|s=0}(\log x^{\b^{(1)}}) x^{\v+\u}\\
&&\quad
+\sum_{\nsupp(\v+\u)=\{ 2\}, \{ 3\}}
(sa_\u^{(2)}(s))_{|s=0}(\log x^{\b^{(2)}}) x^{\v+\u},
\end{eqnarray*}
where
$$
a_\u^{(i)}(s)=
\frac{[\v+s\b^{(i)}]_{\u_-}}{[\v+s\b^{(i)}+\u]_{\u_+}}.
$$
\end{example}

\section{Method 2}

In this section, we consider a Frobenius's method
by perturbing an exponent with several vectors in $L$.

Let $\v$ be a fake exponent, and
$\b^{(1)}, \b^{(2)},\ldots, \b^{(l)}\in L$.
We suppose that for any $i\in \nsupp(\v)$
there exists $j$ such that $b^{(j)}_i\neq 0$.

For such
$\b^{(1)}, \b^{(2)},\ldots, \b^{(l)}\in L$,
set
$$
F_{\b^{(1)}, \b^{(2)},\ldots, \b^{(l)}}(x,\s)
:=\sum_{\nsupp(\v+\u)\in \NS_\w(\v)}
a_\u(\s)x^{\v+\s\b+\u},
$$
where
$$
\s=(s_1,s_2,\ldots, s_l),\quad
\s\b=s_1\b^{(1)}+s_2\b^{(2)}+\cdots+s_l\b^{(l)},
$$
$$
a_\u(\s)=
\frac{[\v+\s\b]_{\u_-}}{[\v+\s\b+\u]_{\u_+}}.
$$

Similarly to Lemma \ref{CoefOrder} and Corollary \ref{cor:a_u:CoefOrder},
we have the following Lemma.

\begin{lemma}
\label{lem:a_u:multi}
\begin{enumerate}
\item[\rm (1)]
$$
a_\u(\s)=c
\frac{\prod_{i\in \nsupp(\v+\u)\setminus\nsupp(\v)}
(s_1b^{(1)}_i+s_2b^{(2)}_i+\cdots+s_lb^{(l)}_i)+\text{higher terms}}
{\prod_{j\in \nsupp(\v)\setminus\nsupp(\v+\u)}
(s_1b^{(1)}_j+s_2b^{(2)}_j+\cdots+s_lb^{(l)}_j)+\text{higher terms}}.
$$
\item[\rm (2)]
$$
[\v+\s\b+\u]_{\u'_+}
=c\!\!\!\!\!\!\!\!\!\!\!\!\!\!\!\!\!\!\!\!
\prod_{i\in \nsupp(\v+\u-\u')\setminus\nsupp(\v+\u)}
\!\!\!\!\!\!\!\!\!\!\!\!\!\!\!\!\!\!\!\!
(s_1b^{(1)}_i+s_2b^{(2)}_i+\cdots+s_lb^{(l)}_i)
+\text{higher terms}.
$$
\end{enumerate}
Here $c$ is a nonzero constant unrelated to $\s$ and
$\b^{(k)}$'s.
\end{lemma}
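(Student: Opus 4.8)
The plan is to rerun the computation in the proof of Lemma~\ref{CoefOrder} essentially verbatim, with each occurrence of the scalar $sb_i$ replaced by the linear form $(\s\b)_i=s_1b^{(1)}_i+\cdots+s_lb^{(l)}_i\in\C[\s]$. The first step is the same basic expansion: for any $\u\in\N^n$,
$$
[\v+\s\b]_\u=\prod_{i=1}^n\prod_{k=0}^{u_i-1}\bigl(v_i-k+(\s\b)_i\bigr),
$$
and a factor $v_i-k+(\s\b)_i$ has nonzero constant term $v_i-k$, hence is a unit in $\C[[\s]]$, unless $v_i-k=0$; the latter happens for a unique $k$ in range exactly when $v_i\in\N$ and $v_i-u_i<0$, i.e.\ precisely for $i\in\nsupp(\v-\u)\setminus\nsupp(\v)$ (recall $\nsupp(\v-\u)\supseteq\nsupp(\v)$ when $\u\in\N^n$), in which case that factor is $(\s\b)_i$. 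Collecting the remaining factors into a unit $U(\s)\in\C[[\s]]$ with $U(\0)=\widehat{[\v]}_\u$, exactly the product of nonzero numerical factors of $[\v]_\u$ appearing in Lemma~\ref{CoefOrder}, one obtains
$$
[\v+\s\b]_\u=\widehat{[\v]}_\u\prod_{i\in\nsupp(\v-\u)\setminus\nsupp(\v)}(\s\b)_i\;+\;(\text{terms of strictly higher degree in }\s),
$$
and the bracketed remainder is what ``higher terms'' abbreviates.

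For part~(1) I would apply this expansion to the numerator $[\v+\s\b]_{\u_-}$ and to the denominator $[\v+\s\b+\u]_{\u_+}$ of $a_\u(\s)$, and then quote the two set identities already established in the proof of Corollary~\ref{cor:CoefOrder}: $\nsupp(\v-\u_-)\setminus\nsupp(\v)=\nsupp(\v+\u)\setminus\nsupp(\v)$, and (the same identity applied with $\v+\u$ and $-\u$ in place of $\v$ and $\u$) $\nsupp((\v+\u)-\u_+)\setminus\nsupp(\v+\u)=\nsupp(\v)\setminus\nsupp(\v+\u)$. These replace the two index sets by the ones in the statement; dividing and absorbing $\widehat{[\v]}_{\u_-}/\widehat{[\v+\u]}_{\u_+}$ into the nonzero constant $c$ gives the asserted form. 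I would also record that $a_\u(\s)$ is a well-defined rational function in $\s$: the leading form of its denominator is $\widehat{[\v+\u]}_{\u_+}\prod_{j\in\nsupp(\v)\setminus\nsupp(\v+\u)}(\s\b)_j$, and each $(\s\b)_j$ with $j\in\nsupp(\v)$ is a nonzero linear form by the standing hypothesis on $\b^{(1)},\ldots,\b^{(l)}$, so the denominator is a nonzero polynomial. For part~(2) I would apply the displayed expansion with $(\v,\u)$ replaced by $(\v+\u,\u'_+)$ (the constant $c$ then being $\widehat{[\v+\u]}_{\u'_+}$), and reuse the argument of Corollary~\ref{cor:CoefOrder}(1) --- if $v_j+u_j\geq0$ and $v_j+u_j-(\u'_+)_j<0$ then $(\u'_+)_j>0$, hence $(\u'_-)_j=0$ and $u'_j=(\u'_+)_j$, so $v_j+u_j-u'_j<0$, and conversely --- to identify $\nsupp((\v+\u)-\u'_+)\setminus\nsupp(\v+\u)$ with $\nsupp(\v+\u-\u')\setminus\nsupp(\v+\u)$, which is the index set appearing in~(2).

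Since the negative-support bookkeeping has already been done in Section~5 and the argument is otherwise parallel to Lemma~\ref{CoefOrder} and Corollary~\ref{cor:a_u:CoefOrder}, I do not anticipate a genuine obstacle. The one point that needs care --- and which I would pin down by convention before starting --- is the meaning of ``higher terms'' once several variables are present: there is no single order in $\s$ anymore, but each Pochhammer-type product still factors as a unit of $\C[[\s]]$ times a squarefree product of degree-one forms, so ``higher terms'' is to be read as ``a polynomial in $\s$ all of whose monomials have degree strictly larger than that squarefree product'' --- equivalently, the part killed by passage to the lowest-degree form. With this understood, the remainder of the proof is the Section~5 computation with $(\s\b)_i$ in place of $sb_i$.
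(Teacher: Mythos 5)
Your proposal is correct and takes exactly the route the paper intends: the paper gives no written proof for this lemma, only the remark that it follows ``similarly to Lemma~\ref{CoefOrder} and Corollary~\ref{cor:a_u:CoefOrder},'' and your argument is precisely that adaptation, replacing $sb_i$ by the linear form $(\s\b)_i$, factoring each Pochhammer product into a unit of $\C[[\s]]$ times a squarefree product of linear forms, and reusing the index-set identities from the proof of Corollary~\ref{cor:CoefOrder}. The only thing the paper leaves implicit that you rightly pinned down is the meaning of ``higher terms'' in several variables, and your reading (strictly higher total degree than the lowest-degree form) is the correct one.
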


\begin{theorem}
\label{thm2}
Put
$K:=\cap_{I\in \NS_\w(\v)}I$.
Set
$$
\widetilde{F}(x,\s)
:=\prod_{i\in I_\0\setminus K}(s_1b^{(1)}_i+s_2b^{(2)}_i+\cdots+s_lb^{(l)}_i)
\cdot F_{\b^{(1)}, \b^{(2)},\ldots, \b^{(l)}}(x,\s).
$$
Let $M$ be the one in Theorem \ref{thm:1}.
Then
\begin{enumerate}
\item[\rm (1)]
$(\partial_{s_1}^{p_1}\cdots \partial_{s_l}^{p_l}
\widetilde{F}(x,\s))_{|\s=\0}$
are solutions to $M_A(\bbeta)$ for
$\sum_{k=1}^lp_k < M-|K|$.
\item[\rm (2)]
Suppose that $\sum_{k=1}^lp_k = M-|K|$.

If
$$
\sum_{\coprod_{j=1}^l L_j
=I\cup J\setminus K;\, |L_1|=p_1,\ldots, |L_l|=p_l}
\prod_{j=1}^lb^{(j)}_{L_j}
=0
$$
for all $I\in \NS_\w(\v)$ and $J\in \NS_\w(\v)^c$ with
$|I\cup J|=M$, then
$(\partial_{s_1}^{p_1}\cdots \partial_{s_l}^{p_l}
\widetilde{F}(x,\s))_{|\s=\0}$
is also a solution to $M_A(\bbeta)$.
\end{enumerate}
\end{theorem}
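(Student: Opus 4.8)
The plan is to mimic the proof of Theorem \ref{thm:1} closely, replacing the single perturbation parameter $s$ by the multi-parameter $\s=(s_1,\ldots,s_l)$ and tracking orders multiplicatively across the coordinates. First I would record that, since each $\b^{(k)}\in L$, the Euler operators annihilate $F_{\b^{(1)},\ldots,\b^{(l)}}(x,\s)$ and hence $\widetilde F(x,\s)$ for every value of $\s$; this disposes of the ``easy'' equations \eqref{eqn:EulerOperators} exactly as before. So the only thing to check is that, for each binomial generator $\partial^{\u'_+}-\partial^{\u'_-}$ of $I_A$ (equivalently for every $\u'\in L$), the series $(\partial^{\u'_+}-\partial^{\u'_-})\widetilde F(x,\s)$ vanishes when we apply $\partial_{s_1}^{p_1}\cdots\partial_{s_l}^{p_l}$ and set $\s=\0$, under the stated hypotheses on $\sum p_k$.

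Next I would carry out the cancellation step. As in \cite[(3.29)]{SST} and in the proof of Theorem \ref{thm:1}, whenever $I_\u,I_{\u+\u'}\in\NS_\w(\v)$ the terms cancel, so that
$$
(\partial^{\u'_+}-\partial^{\u'_-})F_{\b^{(1)},\ldots,\b^{(l)}}(x,\s)
=\sum_{I_\u\in\NS_\w(\v),\ I_{\u-\u'}\in\NS_\w(\v)^c}
\partial^{\u'_+}\bigl(a_\u(\s)x^{\v+\s\b+\u}\bigr)-(\text{symmetric term with }-\u').
$$
Multiplying by $\prod_{i\in I_\0\setminus K}(\s\b)_i$, where I write $(\s\b)_i:=s_1b^{(1)}_i+\cdots+s_lb^{(l)}_i$, and invoking Lemma \ref{lem:a_u:multi}(1), each surviving summand $\prod_{i\in I_\0\setminus K}(\s\b)_i\,a_\u(\s)$ is regular at $\s=\0$: the factor $\prod_{i\in I_\0\setminus K}(\s\b)_i$ kills exactly the denominator $\prod_{j\in I_\0\setminus I_\u}(\s\b)_j$ of $a_\u(\s)$ (using $K\subseteq I_\u\subseteq I_\0\cup(I_\u\setminus I_\0)$ and $K=\cap_{I\in\NS_\w(\v)}I\subseteq I_\u$), and leaves the numerator factor $\prod_{i\in I_\u\setminus I_\0}(\s\b)_i$. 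Then by Lemma \ref{lem:a_u:multi}(2) the operator $\partial^{\u'_+}$ introduces, up to a nonzero constant and higher-order terms, the extra factor $\prod_{i\in J\setminus I_\u}(\s\b)_i$ where $J:=I_{\u-\u'}$. So modulo higher order the coefficient of $x^{\v+\s\b+\u-\u'_+}$ in $\prod_{i\in I_\0\setminus K}(\s\b)_i\,(\partial^{\u'_+}-\partial^{\u'_-})F(x,\s)$ is a nonzero multiple of $\prod_{i\in(I_\u\cup J)\setminus K}(\s\b)_i$, a homogeneous polynomial of degree $|I_\u\cup J|-|K|\geq M-|K|$ in $\s$.

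Finally I would read off both parts. Since a homogeneous polynomial of degree $\geq M-|K|$ has all partial derivatives of total order $<M-|K|$ vanishing at $\s=\0$, part (1) follows immediately. For part (2), total order exactly $M-|K|$ picks out only the contributions with $|I_\u\cup J|=M$, and for such $(I,J)$ the value of $\partial_{s_1}^{p_1}\cdots\partial_{s_l}^{p_l}$ applied to $\prod_{i\in(I\cup J)\setminus K}(\s\b)_i$ at $\s=\0$ is, up to the nonzero constant from Lemma \ref{lem:a_u:multi}, the coefficient of $s_1^{p_1}\cdots s_l^{p_l}$ in that product, namely
$$
\sum_{\coprod_{j=1}^l L_j=I\cup J\setminus K,\ |L_1|=p_1,\ldots,|L_l|=p_l}\ \prod_{j=1}^l b^{(j)}_{L_j};
$$
the hypothesis makes every such coefficient vanish, hence $(\partial_{s_1}^{p_1}\cdots\partial_{s_l}^{p_l}\widetilde F)_{|\s=\0}$ is annihilated by $I_A$ as well, so it solves $M_A(\bbeta)$. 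The main obstacle I anticipate is bookkeeping: making sure the normalization $\prod_{i\in I_\0\setminus K}(\s\b)_i$ exactly matches the pole order of $a_\u(\s)$ uniformly over all relevant $\u$, i.e.\ that $I_\0\setminus K\supseteq I_\0\setminus I_\u$ and that multiplying removes the pole without introducing a spurious zero — this is where one uses $K=\cap_{I\in\NS_\w(\v)}I$ rather than $K=I_\0$ — and then checking that the $x$-exponents $\v+\s\b+\u-\u'_+$ in the two symmetric sums do not overlap so that coefficients cannot cancel among themselves, exactly as in the single-parameter proof. Everything else is the evident $l$-variable analogue of Lemma \ref{CoefOrder} and Corollary \ref{cor:a_u:CoefOrder}, already packaged as Lemma \ref{lem:a_u:multi}.
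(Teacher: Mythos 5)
Your proposal is correct and follows essentially the same route as the paper: use Lemma \ref{lem:a_u:multi} to show that multiplying by $\prod_{i\in I_\0\setminus K}(\s\b)_i$ removes the pole of $a_\u(\s)$, identify the lowest total-degree part of $\partial^{\u'_+}\bigl(\widetilde a_\u(\s)x^{\v+\s\b+\u}\bigr)$ as a nonzero multiple of $\prod_{i\in (I\cup J)\setminus K}(\s\b)_i$ via the disjoint decomposition $(I_\0\cap I\setminus K)\sqcup(J\setminus I)\sqcup(I\setminus I_\0)=I\cup J\setminus K$, and then read off (1) from the degree bound $\geq M-|K|$ and (2) from the vanishing of the coefficient of $s_1^{p_1}\cdots s_l^{p_l}$. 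Your extra remark about checking that the $x$-exponents do not overlap is implicit in the paper (for generic $\s$ the exponents $\v+\s\b+\u$ are pairwise distinct), so no further work is needed.
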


\begin{proof}
By Lemma \ref{lem:a_u:multi},
we see that 
$\prod_{i\in I_\0\setminus K}(s_1b^{(1)}_i+s_2b^{(2)}_i+\cdots+s_lb^{(l)}_i)
\cdot a_\u(\s)$
does not have a pole at $\s=\0$ for any $\u$ with $I_\u\in \NS_\w(\v)$.

Let $\u'\in L$, $\nsupp(\v+\u)=I\in \NS_\w(\v)$, 
and $\nsupp(\v+\u-\u')=J\in \NS_\w(\v)^c$.
By Lemma \ref{lem:a_u:multi},
the part of the lowest total degree 
in 
$$
\prod_{i\in I_\0\setminus K}(s_1b^{(1)}_i+s_2b^{(2)}_i+\cdots+s_lb^{(l)}_i)
\partial^{\u'_+}a_\u(\s)x^{\v+\s\b+\u}
$$
is a nonzero constant multiple of
\begin{eqnarray}
\label{eqn:LowestPart}
&& \prod_{i\in (I_\0\setminus K)\setminus (I_\0\setminus I)}
(s_1b^{(1)}_i+s_2b^{(2)}_i+\cdots+s_lb^{(l)}_i)\\
&&\qquad\times
\prod_{i\in J\setminus I}(s_1b^{(1)}_i+s_2b^{(2)}_i+\cdots+s_lb^{(l)}_i)
\nonumber\\
&&\qquad\times
\prod_{i\in I\setminus I_\0}(s_1b^{(1)}_i+s_2b^{(2)}_i+\cdots+s_lb^{(l)}_i).
\nonumber
\end{eqnarray}
We have
$(I_\0\setminus K)\setminus (I_\0\setminus I)=I_\0\cap I\setminus K$.
Since the three sets
$I_\0\cap I\setminus K$, $J\setminus I$, and $I\setminus I_\0$
are disjoint and their union equals
$I\cup J \setminus K$,
we see that \eqref{eqn:LowestPart} equals
\begin{eqnarray*}
&&
\prod_{i\in I\cup J\setminus K}(s_1b^{(1)}_i+s_2b^{(2)}_i+\cdots+s_lb^{(l)}_i)\\
&=&
\sum_{I\cup J\setminus K=\coprod_{j=1}^l L_j}
\prod_{j=1}^l\prod_{i\in L_j}s_jb^{(j)}_i\\
&=&
\sum_{\sum_{j=1}^l p_j=|I\cup J\setminus K|}
\sum_{|L_1|=p_1,\ldots,|L_l|=p_l}
\prod_{j=1}^lb^{(j)}_{L_j}
s_1^{p_1}s_2^{p_2}\cdots s_l^{p_l}.
\end{eqnarray*}

Hence, as in the proof of Theorem \ref{thm:1},
we have $(1)$.

Furthermore, if the assumption of $(2)$ is satisfied,
then
the $s_1^{p_1}s_2^{p_2}\cdots s_l^{p_l}$ part of each
$\partial^{\u'_+}\widetilde{F}(x,\s)$ is zero.
Hence we have $(2)$.
\end{proof}

\begin{remark}
\label{rmk:Method2}
The proof of Theorem \ref{thm2} is again valid
for any $N\subseteq \NS_\w(\v)$ with $N\ni\nsupp(\v)$
and $N^c:=(\NS_\w(\v)\cup \NS_\w(\v)^c)\setminus N$
in place of $\NS_\w(\v)$ and $\NS_\w(\v)^c$.
\end{remark}

\begin{example}[Continuation of Example \ref{running3-2}]
\label{running3-3}

Let 
$A=
\begin{pmatrix}
1 & 1 & 1 & 1 & 1\\
-1 & 1 & 1 & -1 & 0\\
-1 & -1 & 1 & 1 & 0
\end{pmatrix}
$, and take $\w$ as before.

Let
$\displaystyle
\bbeta=
\begin{pmatrix}
1\\
0\\
0
\end{pmatrix}.
$
Then $\v:=(0, 0, 0, 0,1)^T$ is the unique fake exponent.

$$
\NS_\w(\v)=\{ \emptyset, \{ 5\}\},
$$
$$
\NS_\w(\v)^c=\{ \{ 1,3\}, \{ 2,4\}, \{ 1,3,5\}, 
\{ 2,4,5\}, \{ 1,2,3,4\}\},
$$
Hence
$M=2$, $m=0$, $I_\0=\emptyset$,
and
$$
(\partial^j_s F_\b(x,s))_{|s=0} \quad (j=0,1)
$$
are solutions for any $\b\in L$ by Theorem \ref{thm:1}.

Let $\b^{(1)}, \b^{(2)}$ be the column vectors of $B$
in Example \ref{running3-2}.
We have $K=\emptyset$.
The $(I\cup J\setminus K)$'s with $|I\cup J|=M$ are 
$$
\{ 1,3\}, \{ 2,4\}.
$$
Let $p_1=p_2=1$.
Then
\begin{eqnarray*}
&&
b^{(1)}_{1}b^{(2)}_{3}+b^{(1)}_{3}b^{(2)}_{1}=1\cdot 0+1\cdot 0=0\quad
(I\cup J\setminus K=\{ 1,3\}),\\
&&
b^{(1)}_{2}b^{(2)}_{4}+b^{(1)}_{4}b^{(2)}_{2}=0\cdot 1+0\cdot 1=0\quad
(I\cup J\setminus K=\{ 2,4\}).
\end{eqnarray*}
Hence, by Theorem \ref{thm2} (2),
$(\partial_{s_1}\partial_{s_2}{F}_{\b_1,\b_2}(x,\s))_{|\s=\0}$
is also a solution.

Note that $\nsupp(\v+\u)=\emptyset \Leftrightarrow \u=\0$, and
$a_\0(\s)=1$.
We have
$$
{F}_{\b_1,\b_2}(x,\0)=x_5,
$$
\begin{eqnarray*}
(\partial_{s_1}{F}_{\b_1,\b_2}(x,\s))_{|\s=\0}
&=&
(\partial_{s}{F}_{\b_1}(x,s))_{|s=0}\\
&=&
x_5(\log x^{\b_1})+\sum_{\nsupp(\v+\u)=\{ 5\}}(\partial_{s_1}a_\u)(0)x^{\v+\u},
\end{eqnarray*}
\begin{eqnarray*}
(\partial_{s_2}{F}_{\b_1,\b_2}(x,\s))_{|\s=\0}
&=&
(\partial_{s}{F}_{\b_2}(x,s))_{|s=0}\\
&=&
x_5(\log x^{\b_2})+\sum_{\nsupp(\v+\u)=\{ 5\}}(\partial_{s_2}a_\u)(0)x^{\v+\u},
\end{eqnarray*}
\begin{eqnarray*}
(\partial_{s_1}\partial_{s_2}{F}_{\b_1,\b_2}(x,\s))_{|\s=\0}
&=&
x_5(\log x^{\b_1})(\log x^{\b_2})\\
&&+
\sum_{\nsupp(\v+\u)=\{ 5\}}
(\partial_{s_1}a_\u)(0)(\log x^{\b_2})x^{\v+\u}\\
&&+
\sum_{\nsupp(\v+\u)=\{ 5\}}
(\partial_{s_2}a_\u)(0)(\log x^{\b_1})x^{\v+\u}\\
&&+
\sum_{\nsupp(\v+\u)=\{ 5\}}
(\partial_{s_1}\partial_{s_2}a_\u)(0)x^{\v+\u}.
\end{eqnarray*}
\end{example}

\bigskip

Department of Mathematics, Faculty of Science

Hokkaido University

Sapporo, 060-0810, Japan

e-mail: saito@math.sci.hokudai.ac.jp

\end{document}